\theoremstyle{definition}
\newtheorem{Thm}{Theorem}[section]
\newtheorem{Lem}{Lemma}[section]
\newtheorem{Def}{Definition}[section]
\newtheorem{Prop}{Proposition}[section]
\newtheorem{Conj}{Conjecture}[section]
\newcommand{\N}{\mathbb{N}}
\newcommand{\R}{\mathbb{R}}
\title{Isometries and Equivalences Between Point Configurations, Extended To $\varepsilon$-diffeomorphisms}
\author{Neophytos Charalambides\footnote{University of Michigan Math Department, 530 Church St, Ann Arbor, MI 48109, email: neochara@umich.edu}
\and S.B. Damelin\footnote{Department of Mathematics, University of Michigan, 530 Church Street, Ann Arbor, MI, USA., email: damelin@umich.edu}\and Brad Schwartz
\footnote{University of Michigan, 500 S State St, Ann Arbor, MI 48109, email: baschwa@umich.edu}}
\date{}
\begin{document}

\maketitle

\begin{abstract}
  \par{This announcement considers the following problem.  We deal with the \textit{Orthogonal Procrustes Problem}, in which two point configurations are compared in order to construct
    a map to optimally align the two sets. This  extends this to $\varepsilon$-diffeomorphisms, introduced by [1] Damelin and Fefferman. Examples
    will be given for when complete maps can not be constructed, for if the distributions do match, and finally an algorithm for partitioning
    the configurations into polygons for convenient construction of the maps.  A revision of this announcement is in the memoir preprint: arxiv: 2103.09748 [0], submitted for consideration for publication
\medskip} 
\end{abstract}


\tableofcontents

\section{Introduction}

\section{Procrustes Problem}

\textbf{Procrustes Problem:} \textit{Let $D,n \in \N$, $O(D)$ the set of orthogonal transformations, and $A(D)$ the set of affine transformations in $\R^{D}$. Considering the following action:
  $$x \mapsto Ax + \vec{t}$$
  where $A \in O(D)$, $\vec{t} \in \R^{D}$, $d(\cdot)$ the Euclidean distance in $\R^{D}$. $X = \{x_{1},\ldots,x_{n} \}$ and $Y = \{y_{1},\ldots,y_{n} \}$ are collections of distinct points in $\R^{D}$. If $d(x_i,x_i)=d(y_i,y_j)$ for all $1 \leq i,j \leq n$, then $\exists \varphi \in A(D)$ such that $\varphi(x_i)=y_i$ for all $i,j \in \{1,...,n\}$.} Given the collections $X$ and $Y$, find $A\in O(D)$ and $\vec{t}\in \R^D$, or $\phi\in A(D)$.
\vspace{2mm}

There are several proofs of this problem, which will be omitted.


\section{Non-Reconstructible Configurations}

In [2] we see an example of two set of points, where the set of their distances match but the point configurations do not, for a total number of points $n=4$. [2] Proposition 2.1 gives a simple way to check such cases.\\

\begin{Def}
  \textit{By relabelling, we mean that if there's an initial labelling of ordered points in two incongruent configurations, we reorder them in such a way that there's a correspondence between the points}.
\end{Def}

An example is if there's an initial labelling  of ordered points $\{a,b,c,d,e\}$ in $X$ and $\{\alpha,\beta,\epsilon,\delta,\gamma\}$ in $Y$, where $a$ corresponds to $\alpha$, $b$ to $\beta$, $c$ to $\gamma$, $d$ to $\delta$ and $e$ to $\epsilon$, one such relabelling will come from the permutation $\bigl(\begin{smallmatrix} 1 & 2 & 3 & 4 & 5\\ 1 & 2 & 5 & 4 & 3 \end{smallmatrix}\bigr)$.\\

\begin{Prop}
  $[2]$ \textit{Suppose } $n \neq 4.$ \textit{A permutation} $\phi \in S_{\binom{n}{2}}$ \textit{is a relabelling if and only if for all pairwise distinct indices} $i,j,k \in \{1,...,n\}$ \textit{we have:}
  $$\phi\cdot\{i,j\} \cap \phi\cdot\{i,k\} \neq \emptyset.$$
\end{Prop}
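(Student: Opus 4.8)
The plan is to recognize the claim as the computation of the automorphism group of the \emph{triangular graph} $T(n)$: its vertices are the $\binom{n}{2}$ pairs $\{i,j\}$, and two distinct vertices are adjacent exactly when the corresponding pairs intersect. The ``only if'' direction is immediate: if $\phi$ is the relabelling induced by $\sigma\in S_n$, i.e. $\phi\cdot\{p,q\}=\{\sigma(p),\sigma(q)\}$, then $\sigma(i)$ lies in $\phi\cdot\{i,j\}\cap\phi\cdot\{i,k\}$, so the condition holds. For $n\le 3$ the proposition is checked directly (for $n\le 2$ it is vacuous, and for $n=3$ any two of the three pairs already meet, so every $\phi$ satisfies the condition and, since $S_3$ acts faithfully on the three pairs, every $\phi$ is a relabelling). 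So the substance is the converse for $n\ge 5$.

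First I would note that every edge of $T(n)$ has the form $\{\{i,j\},\{i,k\}\}$ with $i,j,k$ distinct, because two distinct intersecting pairs share exactly one point; hence the hypothesis says precisely that $\phi$ carries edges of $T(n)$ to edges. Since $\phi$ permutes a finite vertex set, it induces an injection, hence a bijection, of the finite edge set onto itself, so $\phi^{-1}$ also preserves edges and $\phi$ is a graph automorphism of $T(n)$.

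The key step is to classify the maximal cliques of $T(n)$. A pairwise-intersecting family of $2$-subsets either has a common point $v$ --- and then lies in the \emph{star} $\mathrm{St}_v=\{\{v,w\}:w\ne v\}$ --- or has no common point, in which case (picking a member $\{a,b\}$, a member avoiding $a$, and a member avoiding $b$, one is forced to conclude the latter two are $\{b,x\}$ and $\{a,x\}$ for a common $x$, and then every further member lies inside $\{a,b,x\}$) the family is contained in the \emph{triangle} $\{\{a,b\},\{a,x\},\{b,x\}\}$. Thus the maximal cliques of $T(n)$ are the $n$ stars, each of size $n-1$, together with the triangles, each of size $3$. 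This is the one place $n\ne 4$ is essential: for $n\ge 5$ we have $n-1>3$, so the stars are exactly the maximal cliques of maximum cardinality, whereas for $n=4$ stars and triangles both have size $3$ and $T(4)$ (the octahedron $K_{2,2,2}$) acquires automorphisms not coming from $S_4$.

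Finally, since $\phi$ is an automorphism it permutes the maximal cliques and, for $n\ge 5$, it must permute the $n$ stars among themselves; define $\sigma\in S_n$ by $\phi(\mathrm{St}_v)=\mathrm{St}_{\sigma(v)}$. Because $\{i,j\}$ is the unique vertex lying in both $\mathrm{St}_i$ and $\mathrm{St}_j$, its image $\phi(\{i,j\})$ is the unique vertex in $\mathrm{St}_{\sigma(i)}\cap\mathrm{St}_{\sigma(j)}$, namely $\{\sigma(i),\sigma(j)\}$. Hence $\phi$ is the relabelling induced by $\sigma$. I expect the clique classification to be the only real obstacle; the rest is bookkeeping, and the hypothesis $n\ne 4$ enters solely through the comparison of $n-1$ with $3$.
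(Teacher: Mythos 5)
Your proof is correct. Note first that the paper itself offers no proof of this Proposition --- it is quoted from [2] (Boutin--Kemper) without argument --- so there is no in-paper proof to compare against. Your route, reading the hypothesis as edge-preservation on the triangular graph $T(n)=L(K_n)$ and then classifying its maximal cliques into stars of size $n-1$ and triangles of size $3$, is the standard one (essentially Whitney's theorem on line-graph isomorphisms), and it is in substance the same argument as in the cited source: for each $i$ the images $\phi\cdot\{i,j\}$, $j\neq i$, form a pairwise-intersecting family of $n-1$ two-element sets, which for $n\geq 5$ is too large to be a triangle and hence has a common element $\sigma(i)$. Your identification of where $n\neq 4$ enters --- solely in the comparison $n-1>3$, the octahedron $T(4)\cong K_{2,2,2}$ having automorphisms not induced by $S_4$ --- is exactly right, and the small cases $n\leq 3$ are disposed of correctly.
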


In other words, we need to take the edges of equal length between the two configurations we are considering and check if there's a mutual vertex between all such pairs for a given permutation $\phi \in S_{\binom{n}{2}}$. This permutation is what will give us the labelling if it does exist.\\

In the context of our problem, we consider the given n-point configurations $\{p_1,...,p_n\}$ and $\{q_1,...,q_n\}$ with their corresponding pairwise distances $D_P = \{dp_{ij} | dp_{ij}=d(p_i,p_j), 1\leq i,j\leq n\}$ and $D_Q = \{dq_{ij} | dq_{ij}=d(q_i,q_j), 1\leq i,j\leq n\}$ with $D_P = D_Q$ up to some reordering and $|D_P|=|D_Q|=\binom{n}{2}$.\\
\\
We then want to find if $\exists \{i,k\},\{j,l\}$ such that $d(p_i,p_k) = d(q_j,q_l) \Leftrightarrow dp_{ik} = dq_{jl} \text{   }\forall i,j,k,l \in \{1,...,n\}$ for a permutation $\phi \in S_{\binom{n}{2}}$. In the case where this isn't true, we need to disregard a certain number of $\textit{bad points}$ from both configurations in order to achieve this.\\

\subsection{Example}
Below is an example with two different 4-point configurations in $\R^2$ which have the same \textit{distribution of distances}. The corresponding equal distances between the 2 configurations are represented in the same color, and the we have two edges with distances $1, 2 \text{ and } \sqrt{5}$, but it's obvious that there doesn't exist a Euclidean transformation between the two.

From this example we can construct infinitely many sets of 2 different configurations with the same distribution of distances. This can be done by simply adding as many points as desired on the same location across the dashed line in both the configurations of figure 2.


\begin{figure}[h]
  \centerline{\includegraphics[width=15cm]{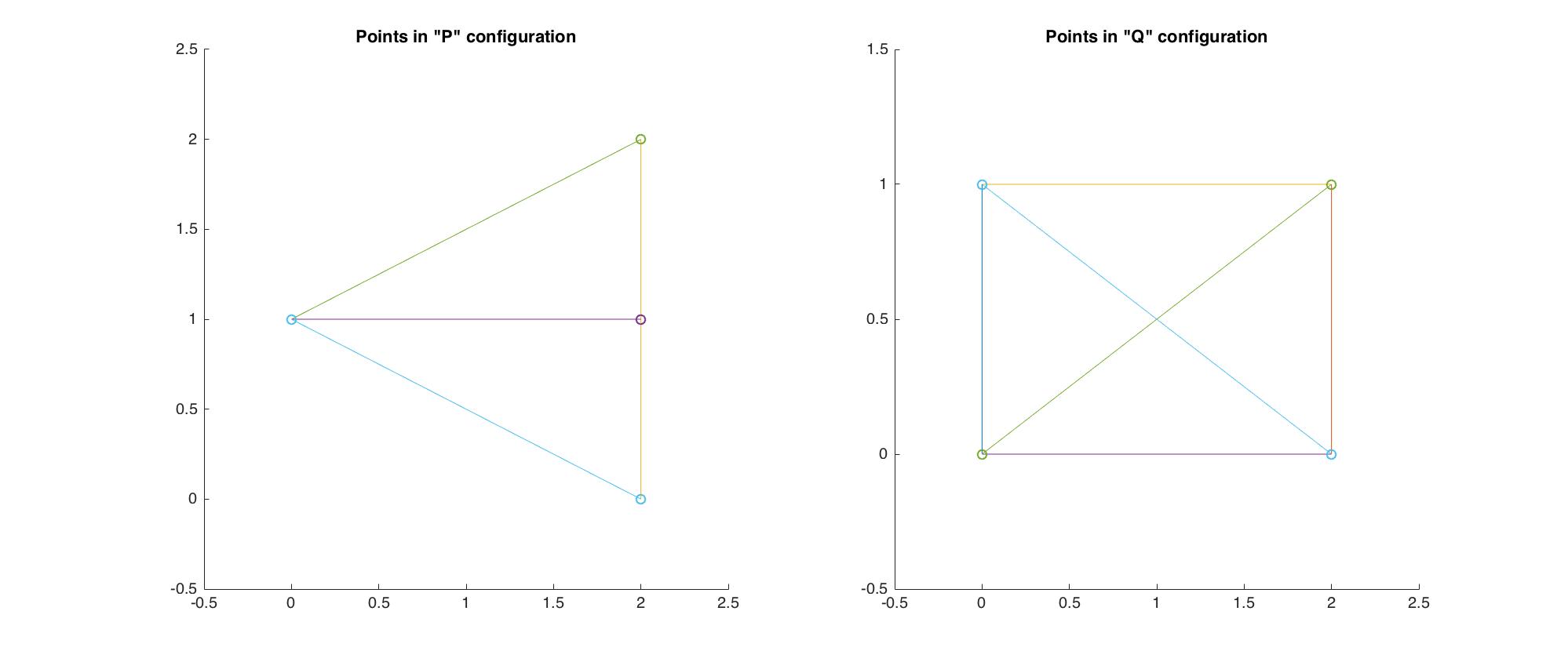}}
  \caption{Two different 4-point configurations with the same distribution of distances}
\end{figure}
\begin{figure}[h]
  \centerline{\includegraphics[width=15cm]{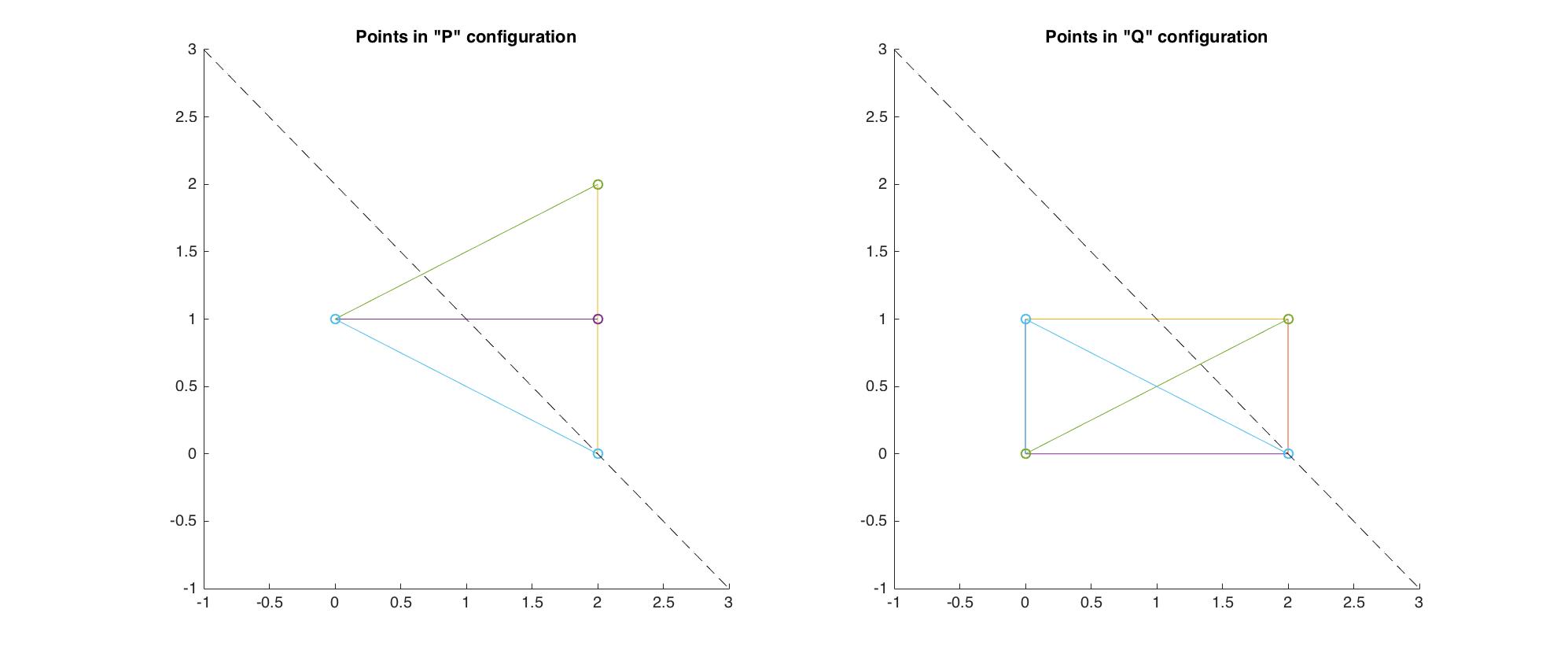}}
  \caption{Configurations with the same distribution of distances for $n\geq4$}
\end{figure}

In both the above example and the one mentioned in [2], it suffices to exclude one point from the two configurations and you will be able to get a Euclidean motion to move from one configuration to the other.
\vspace{5mm}

\begin{Conj}
  \textit{For two n-point configurations $P,Q\in\R^2$ with $D_P=D_Q$ for which $\nexists A\in O(2)$, $ \vec{t}\in\R^2$ such that $Q=\big(AP+\vec{t}\big)$ assuming the points have been labelled appropriately, then $\exists p_i\in P$ and $\exists q_i \in Q$ such that $Q\backslash q_i=\big(A(P\backslash p_i)+\vec{t}\big)$, for some $A\in O(2)$, $\vec{t}\in\R^2$.}
\end{Conj}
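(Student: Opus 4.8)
The plan is to combine the Procrustes Problem of Section 2 with Proposition 3.1 of [2], turning the statement into the claim that the obstruction to a distance-matching permutation of edges being a relabelling is always concentrated at a single vertex of each configuration, so that deleting that vertex removes it. The small cases are disposed of first: for $n\le 3$ two point sets with equal distance multisets are congruent (SSS, and trivially for $n\le 2$), so the hypothesis of the conjecture is vacuous; for $n=4$ one verifies the conclusion directly on the (essentially one-parameter) family of non-reconstructible planar $4$-point configurations — for the configurations of Figures 1 and 2 this is exactly the observation recorded after Figure 2, that discarding one point produces a Euclidean motion. So assume $n\ge 5$ from now on.

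Since $D_P=D_Q$ as multisets, fix a bijection $\phi\in S_{\binom{n}{2}}$ matching the pairwise distances of $P$ with those of $Q$. By the Procrustes Problem, once $\phi$ is a relabelling in the sense of Definition 3.1 there is a Euclidean motion $A\in O(2),\ \vec t\in\R^2$ with $Q=AP+\vec t$ after relabelling; and by Proposition 3.1, applicable since $n\neq 4$, $\phi$ is a relabelling precisely when $\phi\cdot\{i,j\}\cap\phi\cdot\{i,k\}\neq\emptyset$ for all distinct $i,j,k$. Thus the hypothesis says exactly that every distance-matching $\phi$ has a bad triple: distinct $i,j,k$ with $\phi\cdot\{i,j\}\cap\phi\cdot\{i,k\}=\emptyset$. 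Note also a necessary condition for the conclusion: writing $v_P(p)$ for the multiset of distances from $p$ to the other points of $P$, if $P\backslash p_{i^*}\cong Q\backslash q_{j^*}$ then $v_P(p_{i^*})=v_Q(q_{j^*})$, since $D_{P\backslash p_{i^*}}=D_P\backslash v_P(p_{i^*})$, $D_{Q\backslash q_{j^*}}=D_Q\backslash v_Q(q_{j^*})$ and $D_P=D_Q$. Hence it is enough to produce $p_{i^*}\in P$ and $q_{j^*}\in Q$ for which the $(n-1)$-point configurations $P\backslash p_{i^*}$ and $Q\backslash q_{j^*}$ are congruent; the equality $v_P(p_{i^*})=v_Q(q_{j^*})$, and so $D_{P\backslash p_{i^*}}=D_{Q\backslash q_{j^*}}$, is then automatic.

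For the localization I would use the rigidity of the plane: a point of $\R^2$ is determined up to reflection across a line by its distances to two distinct points of that line, and determined outright by its distances to three non-collinear points. Assume first that $P$ is not collinear. As $D_P=D_Q$, the configurations share a diameter $\delta$; after applying isometries, align a diametral pair $\overline{ab}$ of $P$ with one of $Q$, so that $a,b$ are common and every other point of either configuration lies in the lens $\overline{D}(a,\delta)\cap\overline{D}(b,\delta)$. One then extends $\{a,b\}$ to a frame $\{a,b,c\}$ with $c\notin ab$ and the triangle $abc$ realized in both $P$ and $Q$, relative to which every remaining point of $P$ and of $Q$ is pinned down by its triple of distances to the frame. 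The failure $P\not\cong Q$ must then show up, under the best matching of the remaining points, as a single $P$-point $p_{i^*}$ whose frame-distance-triple has no partner in $Q$ — otherwise the matching would extend to a congruence; taking $q_{j^*}$ to be the unique unmatched $Q$-point, deleting $p_{i^*}$ and $q_{j^*}$ removes the sole obstruction, and the remaining $(n-1)$-point configurations agree on the frame and on every other point, hence are congruent. Re-applying the Procrustes Problem to them (or Proposition 3.1, with $n-1=4$ treated as in the first paragraph) yields the desired $A\in O(2),\ \vec t\in\R^2$.

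The main obstacle is exactly this localization: a priori the failure of a distance-matching $\phi$ to be a relabelling may be spread over several vertices, which is precisely the situation that would force deleting more than one point. The difficulty is compounded by $\phi$ matching \emph{edges} rather than vertices, so that it does not by itself pair each $P$-point's profile of distances to the extremal vertices $a,b$ with any $Q$-point's — producing such a pairing for all points simultaneously is what Proposition 3.1 rules out under the hypothesis. I therefore expect a rigorous proof to require either a classification of non-reconstructible planar configurations extending [2], or a counting argument showing that once a diametral edge and one further triangle are fixed at most one remaining point can fail to match; and I expect the collinear case to need separate treatment and possibly to be false as stated, since collinear configurations are governed by the homometric-set phenomenon on a line, where non-congruent homometric pairs occur already at $n=6$ and need not differ by a single point — so the conjecture may require the added hypothesis that $P$ (equivalently $Q$) is not collinear.
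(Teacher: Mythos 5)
The first thing to say is that the paper contains no proof of this statement: it is posed as Conjecture 3.1 and left open, so there is no argument of the authors' to compare yours against. Your proposal therefore has to stand on its own, and as written it does not close the conjecture.

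The genuine gap is the one you yourself flag at the end: the localization step. After fixing the frame $\{a,b,c\}$ you assert that the failure of congruence must show up as a \emph{single} $P$-point $p_{i^*}$ whose frame-distance-triple has no partner in $Q$, ``otherwise the matching would extend to a congruence.'' That parenthetical only establishes that \emph{at least one} point fails to match; nothing rules out two or more $P$-points lacking partners simultaneously, in which case deleting one pair does not restore congruence --- and that is precisely the content of the conjecture, so the argument is circular at its crucial point. Two supporting steps are also unjustified: (i) equality of the distance multisets guarantees that the diameter $\delta$ is realized in both configurations, but not that there exist $c\in P$ and $c'\in Q$ with the \emph{same pair} of distances to the aligned diametral segment, so the common triangle $abc$ need not exist; and (ii) the necessary condition $v_P(p_{i^*})=v_Q(q_{j^*})$ is stated as if it helps locate the bad pair, but under the hypothesis there is no vertex-to-vertex correspondence to begin with, so comparing the per-vertex distance multisets requires an argument you have not supplied. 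Your closing concern about collinear configurations is well taken and is arguably the most valuable observation in the proposal: one-dimensional homometric pairs (the turnpike/beltway phenomenon) give non-congruent sets with equal distance distributions for small $n$, with no reason to differ by only one point, so the conjecture may be false as stated without a non-collinearity hypothesis. In short, you have correctly reduced the problem and correctly identified where the difficulty sits, but the central claim --- that the obstruction is always concentrated at one vertex of each configuration --- is asserted rather than proved, which is exactly why the paper leaves this as a conjecture.
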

\vspace{2mm}

If the above conjecture holds, it suffices to exclude a single bad-point from both $P$ and $Q$, such that $P$ and $Q$ differ only by a Euclidean motion. Iterating through the potential pairs of bad-points will take $\mathcal{O}(n^2)$, the issue still arises in determining whether the points we excluded results in two congruent configurations.


\section{Partition Into Polygons}

One approach we can take in order to see which points should be excluded from our 2 configurations $P$ and $Q$, is to partition the entire configurations into smaller polygons and compare polygons of the \textit{same area}, in order to determine existing point correspondences between $P$ and $Q$. For any subsets $\{i,j,...\}\subseteq\{1,...,n\}$ or $\{s,t\}\subseteq\{1,...,k\}$ we consider in the upcoming sections, the elements of each subset will be distinct.

\subsection{Considering Areas Of Triangles - \textit{10-step algorithm}}
Considering our two n-point configurations $P=\{p_1,...,p_n\}$ and $Q=\{q_1,...,q_n\}$, we partition them into a total of $\binom{n}{3} \textit{ triangles }$ and considering the distance between our 3 points in each case, let's say indexed $i,j,k$, we have the distances $dp_{ij},dp_{ik},dp_{jk}$ and analogously $dq_{i'j'},dq_{i'k'},dq_{j'k'}$.\\

We now compute the areas as follows:
$$A_{ijk} = \sqrt{\frac{s(\frac{s}{2}-dp_{ij})(\frac{s}{2}-dp_{ik})(\frac{s}{2}-dp_{jk})}{2}} \qquad \text{where } s:=dp_{ij}+dp_{ik}+dp_{jk}$$
$$B_{i'j'k'} = \sqrt{\frac{s'(\frac{s'}{2}-dq_{i'j'})(\frac{s'}{2}-dq_{i'k'})(\frac{s'}{2}-dq_{j'k'})}{2}} \qquad \text{where } s':=dq_{i'j'}+dq_{i'k'}+dq_{j'k'}$$

and consider the sets of areas
$$\mathcal{A}=\big\{A_{ijk} |  \forall \{i,j,k\}\subseteq\{1,...,n\}\big\}$$
$$\mathcal{B}=\big\{B_{i'j'k'} |  \forall \{i',j',k'\}\subseteq\{1,...,n\}\big\}\}$$
where $|\mathcal{A}|=|\mathcal{B}|=\binom{n}{3}=\frac{n(n-1)(n-2)}{6}$.
\vspace{5mm}

We further partition the above sets as follows:
$$\mathcal{A}_1 = \big\{A_{ijk} | A_{ijk}\in \mathcal{A} \text{ and } \exists B_{i'j'k'} \in \mathcal{B} \text{ s.t. } A_{ijk}=B_{i'j'k'} \big\}$$
$$\mathcal{B}_1 = \big\{B_{i'j'k'} | B_{i'j'k'}\in \mathcal{B} \text{ and } \exists A_{ijk} \in \mathcal{A} \text{ s.t. } B_{i'j'k'}=A_{ijk} \big\}$$
$$\mathcal{A}_2 = \mathcal{A} \backslash \mathcal{A}_1 \qquad \qquad \mathcal{B}_2 = \mathcal{B} \backslash \mathcal{B}_1$$

Note that it may not be true that $|\mathcal{A}_1| = |\mathcal{B}_1|$, as the areas need not all be distinct.We essentially want the $\textit{shapes}$ formed by our points in the two sets which are ``$\textit{identical}$''. Let's assume our sets $\mathcal{A}_1$ and $\mathcal{B}_1$ are in ascending order with respect to the modes of the areas. We undertake the following steps in order to check which points to disregard and permutations are valid:
\begin{enumerate}
\item Disregard all points from from $P$ and $Q$ which are vertices of triangles in $\mathcal{A}_2$ and $\mathcal{B}_2$ respectively, but at the same time not vertices of any triangle in $\mathcal{A}_1$ and $\mathcal{B}_1$.
\item In order, we take $A_{ijk} \in \mathcal{A}_1$ and the corresponding triangles in $\mathcal{B}_1$, with $A_{ijk} = B_{i'j'k'}$.
\item If the distances of the sides of the triangles corresponding to $A_{ijk}$ and $B_{i'j'k'}$ don't match, disregard the triangles with area $B_{i'j'k'}$.
\item If the distances match up, we assign the points the corresponding points from $P$ to $Q$ and essentially start constructing our permutation, so thus far we have:
  $$\bigl(\begin{smallmatrix}
  i & j & k & \cdots \\
  i' & j' & k' & \cdots
\end{smallmatrix}\bigr)$$
  Alternatively, we can match the points between $A_{ijk}$ and $B_{i'j'k'}$ which have the same corresponding \textit{angles}.
\item Note that we might have more than 1 possible permutation, so for now we keep track of all of them and list them as $\alpha_s^{(t)}= \bigl(\begin{smallmatrix}
  i & j & k & \cdots \\
  i' & j' & k' & \cdots
\end{smallmatrix}\bigr)$ for $s$ being the indicator of the triangle we take from $\mathcal{A}_1$, and $t$ being the indicator of the corresponding triangle in $\mathcal{B}_1$ in order (so if 3 triangles correspond, we have $t \in \{1,2,3\}$).
  \begin{itemize}
  \item For triangles with 3 distinct inner angles we will have 1 permutation, for \textit{isosceles} triangles 2 permutations, and for \textit{equilateral} triangles  3!=6 permutations.
  \item In the case of \textit{squares} when considering quadrilaterals, we will have 4!=24 permutations (will be discussed in section 3.3).
  \item This can be thought of as \textit{matching angles} between equidistant edges of our polygons.
  \end{itemize}
\item Go to the next triangle in $\mathcal{A}_1$ (which might have the same area as our previous triangle), and repeat steps 2-4
  \begin{itemize}
  \item If the distances of our current triangle match with those of our previous triangle, simply take all previous permutations and ``$\textit{concatenate}$'' them. So for example $\big(\alpha_1^{(1)}\big)_{2} = \bigl(\begin{smallmatrix} \alpha_1^{(1)} \alpha_1^{(2)} \end{smallmatrix}\bigr) $, where the index $v$ in $\big(\alpha_s^{(t)}\big)_{v}$ indicates the combination we have with $\alpha_s^{(t)}$ being the first \textit{element} of the permutation as above. We therefore get a total of $\prod_{\iota=0}^{\nu-1}(t-\iota)$ permutations we are currently keeping track of, where $\nu$ is the number of \textit{elements} in the constructions thus far. Note that in the above procedure we assume no common points, and the case where mutual points exists is described below.
  \end{itemize}

\item If our current and previous triangles $\textit{share points}$, we consider the combination of two triangles in $\mathcal{B}_1$ with the same corresponding areas and shapes and matching points as the combination of the two triangles taken from $\mathcal{A}_1$, so we'll either get a quadrilateral (if they share 2 points) or two triangles sharing an vertex (not a pentagon), and check whether all $\binom{4}{2}$ or $\binom{5}{2}$ distances between our 2 shapes match up. If they do, we replace or extend the permutations we are keeping track of, and disregard any permutations from before which don't satisfy the conditions of this bullet-point.
\item If our current and previous triangles $\textit{don't share points}$, we essentially repeat steps 2-5 and $\textit{extend}$ the permutations we are keeping track of, in a similar manner to that shown in step 6.
\item At this point we have traversed through all triangles in both $\mathcal{A}_1$ and $\mathcal{B}_1$ with the same area, and have constructed permutations (not necessarily all of the same size) which can be considered as $\textit{sub-correspondence}$ of points between $P$ and $Q$ (meaning that more points may be included to the correspondences). We are now going to be considering the triangles with area of the next lowest mode and repeat steps 2-8, while keeping track of the permutations we have thus far. Some steps though will be slightly modified as now we are considering various shapes (corresponding to our permutations), and in the above steps when referring to our "$\textit{previous triangle}$", we will now be considering our "$\textit{previous shapes}$".
\item Repeating the above until we traverse through all triangles in $\mathcal{A}_1$ and $\mathcal{B}_1$ will give us a certain number of permutations, and for our problem we can simply take the permutations of  the largest size (might have multiple) and the points which aren't included in that permutation can be considered as $\textit{bad points}$ for the problem. Note that certain points might be considered as $\textit{bad}$ for certain permutations and not for others, which depends entirely on $P$ and $Q$.
\end{enumerate}

\subsection*{Brief Explanation On The Above Approach}
The idea of the above approach is to disregard non-identical shapes and configurations of the point sets $P$ and $Q$, while simultaneously constructing the desired permutations of $\textit{sub-configurations}$ which have the same shape. Note that we start of with the triangle areas which have the smallest mode in order to simplify the implementation of this algorithm. There will exist a diffeomorphism between $P$ and $Q$ $\textit{if and only if}$ the maximum permutations constructed have size $n$, where all points will be included. A drawback of this approach, is that we keep track of a relatively large number of permutations through out this process, but when going through each set of triangles of the same area, a lot of them are disregarded in step 7.


\subsection{Considering Areas Of Quadrilaterals}
Alternatively, we can partition $P=\{p_1,...,p_n\}$ and $Q=\{q_1,...,q_n\}$, by partitioning them into a total of $\binom{n}{4} \textit{ quadrilaterals }$, and consider the $\binom{4}{2}=6$ distances between our 4 points in each case. If we take 4 distinct points indexed $i,j,k,l$, we have the set of distances $\mathcal{DP}_{ijkl}=\{dp_{ij},dp_{ik},dp_{il},dp_{jk},dp_{jl},dp_{kl}\}$ and analogously $\mathcal{DQ}_{i'j'k'l'}=\{dq_{i'j'},dq_{i'k'},dq_{i'l'},dq_{j'k'},dq_{j'l'},dq_{k'l'}\}$.\\
\\
We now compute the areas as follows:
$$r:=\text{Diagonal}\{\mathcal{DP}_{ijkl} \} \qquad s:=\text{Diagonal}\{\mathcal{DP}_{ijkl}\backslash \{r\}\}$$
$$\text{\textit{r} and \textit{s} correspond to the diagonals of the quadrilateral.}$$
$$\{a,b,c,d\}:=\mathcal{DP}_{ijkl}\backslash\{r,s\}$$
$$\text{where \textit{a,c} correspond to distances of edges which don't share a vertex}$$
$$A_{ijkl} = \frac{1}{4}\sqrt{4r^2s^2-(a^2+c^2-b^2-d^2)^2}$$
\hrule
$$r':=\text{Diagonal}\{\mathcal{DQ}_{i'j'k'l'} \} \qquad s':=\text{Diagonal}\{\mathcal{DQ}_{i'j'k'l'}\backslash \{r\}\}$$
$$\text{\textit{r'} and \textit{s'} correspond to the diagonals of the quadrilateral.}$$
$$\{a',b',c',d'\}:=\mathcal{DQ}_{i'j'k'l'}\backslash\{r',s'\}$$
$$\text{where \textit{a',c'} correspond to distances of edges which don't share a vertex}$$
$$B_{i'j'k'l'} = \frac{1}{4}\sqrt{4r'^2s'^2-(a'^2+c'^2-b'^2-d'^2)^2}$$

and consider the sets of areas
$$\mathcal{A}=\big\{A_{ijkl} |  \forall \{i,j,k,l\}\subseteq\{1,...,n\}\big\}$$
$$\mathcal{B}=\big\{B_{i'j'k'l'} |  \forall \{i',j',k',l'\}\subseteq\{1,...,n\}\big\}$$
where $|\mathcal{A}|=|\mathcal{B}|=\binom{n}{4}=\frac{n(n-1)(n-2)(n-3)}{24}$.
\vspace{5mm}

We further partition the above sets as follows:
$$\mathcal{A}_1 = \big\{A_{ijkl} | A_{ijkl}\in \mathcal{A} \text{ and } \exists B_{i'j'k'l'} \in \mathcal{B} \text{ s.t. } A_{ijkl}=B_{i'j'k'l'} \big\}$$
$$\mathcal{B}_1 = \big\{B_{i'j'k'l'} | B_{i'j'k'l'}\in \mathcal{B} \text{ and } \exists A_{ijkl} \in \mathcal{A} \text{ s.t. } B_{i'j'k'l'}=A_{ijkl} \big\}$$
$$\mathcal{A}_2 = \mathcal{A} \backslash \mathcal{A}_1 \qquad \qquad \mathcal{B}_2 = \mathcal{B} \backslash \mathcal{B}_1$$

We can now follow the same algorithm described in section 3.1, with the exception that now we'll be considering 4 points at a time, rather than 3. Depending on the point-configurations $P$ and $Q$, either this approach or the previous approach might be more efficient, but this cannot be determined \textit{a priori}.


\section{Partition Into Polygons For $\varepsilon$-distortions}

We extend our previous work to $\varepsilon$-distortions.
\subsection{Areas Of Triangles For $\varepsilon$-distortions}
For notational convenience we will be using the same notation used in section 3.1, as well as the fact that our sets will have the following property:
$$(1-\varepsilon_{ij}) \leq \frac{dp_{ij}}{dq_{i'j'}} = \frac{||p_i-p_j||}{||q_i'-q_j'||} \leq (1+\varepsilon_{ij}) \text{, } \forall \{i,j\} \subseteq \{1,...,n\} \text{, given } i\neq j$$
rather than $dp_{ij} = dq_{i'j'} \Leftrightarrow ||p_i-p_j||=||q_i'-q_j'||$.
\vspace{5mm}

\begin{Thm}
  \textit{For our usual setup, it holds that for three points in our two point configurations $P$ and $Q$ with indices and areas $\{i,j,k\}$, $A_{ijk}$ and $\{i',j',k'\}$, $B_{i'j'k'}$ respectively, the points can be mapped from $P$ to $Q$ through an $E$-distorted diffeomorphism if and only if}
  $$\sqrt{(B_{i'j'k'})^2-\frac{1}{16}\cdot H_{1}} \leq A_{ijk} \leq \sqrt{(B_{i'j'k'})^2+\frac{1}{16}\cdot H_{2}}$$
  \textit{where $H_1,H_2$ depend on} $E := max\big\{\varepsilon_{st}|\{s,t\} \subseteq \{i,j,k\}\big\}$, \textit{and the elements of the distribution of distances of} $B_{i'j'k'}$.\\
\end{Thm}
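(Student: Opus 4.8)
The plan is to reduce the statement to a purely algebraic two-sided estimate by passing to the polynomial (Cayley--Menger) form of Heron's formula. Writing $a=dp_{ij}$, $b=dp_{ik}$, $c=dp_{jk}$ for the side lengths of the first triangle and $a'=dq_{i'j'}$, $b'=dq_{i'k'}$, $c'=dq_{j'k'}$ for the second, I would first verify that the quantity defined in Section~3.1 satisfies
$$16\,(A_{ijk})^2 = 2a^2b^2 + 2b^2c^2 + 2a^2c^2 - a^4 - b^4 - c^4,$$
and similarly for $16\,(B_{i'j'k'})^2$ in the primed variables; this is just the factorization $(a+b+c)(-a+b+c)(a-b+c)(a+b-c)$ rewritten. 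The whole theorem then amounts to an estimate for $16(A_{ijk})^2 - 16(B_{i'j'k'})^2$, after which one divides by $16$ and takes square roots.

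For the forward implication, suppose an $E$-distorted diffeomorphism carries $\{p_i,p_j,p_k\}$ onto $\{q_{i'},q_{j'},q_{k'}\}$. Then every pairwise ratio lies in $[1-E,1+E]$, so (assuming $E\le 1$) one has $a^2 = u\,a'^2$, $b^2=v\,b'^2$, $c^2=w\,c'^2$ for some $u,v,w\in[(1-E)^2,(1+E)^2]$. Substituting into the Heron polynomial and estimating monomial by monomial — each positive term $a^2b^2,\,b^2c^2,\,a^2c^2$ from above by $(1+E)^4$ times its primed counterpart and each subtracted term $a^4,\,b^4,\,c^4$ from below by $(1-E)^4$ times its primed counterpart for the upper bound, and with the roles reversed for the lower bound — I expect to land on
$$16(B_{i'j'k'})^2 - H_1 \ \le\ 16(A_{ijk})^2 \ \le\ 16(B_{i'j'k'})^2 + H_2,$$
with $H_2 = [(1+E)^4-1](2a'^2b'^2+2b'^2c'^2+2a'^2c'^2) + [1-(1-E)^4](a'^4+b'^4+c'^4)$ and $H_1 = [1-(1-E)^4](2a'^2b'^2+2b'^2c'^2+2a'^2c'^2) + [(1+E)^4-1](a'^4+b'^4+c'^4)$. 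Both are visibly functions of $E$ alone together with the three side lengths of $B_{i'j'k'}$, i.e. of its distribution of distances, as the statement requires. Dividing by $16$ and taking square roots yields the displayed inequality, the lower bound being read as $\max\{0,\,16(B_{i'j'k'})^2-H_1\}/16$ under the root when the right side is negative.

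For the converse I would use the standing hypothesis of this section directly: since $(1-\varepsilon_{st})\le dp_{st}/dq_{s't'}\le(1+\varepsilon_{st})$ for every pair and $\varepsilon_{st}\le E$, all three side-length ratios already lie in $[1-E,1+E]$. Because a triangle is determined by its three side lengths up to congruence, the vertex map $p_s\mapsto q_{s'}$ then extends to a diffeomorphism of a neighborhood whose distortion is controlled by exactly these ratios — for instance the affine map determined by the correspondence, or the localized construction of Damelin and Fefferman in [1] — so an $E$-distorted diffeomorphism between the two triples exists; paired with the forward direction this gives the equivalence. I would also note that in this regime the area inequality is, for the same reason, automatically in force, so the real payload of the theorem is the explicit closed form of $H_1$ and $H_2$.

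The hardest part, I expect, will be the monomial-by-monomial estimate: the positive and negative terms of the Heron polynomial are driven to their extremes by pushing the side lengths in opposite directions, so the constants $H_1,H_2$ above are not literally the maximum and minimum of the quartic over the cube $[(1-E)^2,(1+E)^2]^3$. One must therefore decide whether to argue that these particular constants are nonetheless the sharp ones at the level of areas (using, say, that the extreme configurations are attained on the boundary of the cube and that the relevant boundary optimum matches the crude bound), or instead to settle for sufficiency and remark that tighter but less explicit constants follow from optimizing the quartic directly. A secondary point to nail down is the precise meaning of ``$E$-distorted diffeomorphism'' for a three-point set as used in [1], so that the converse is genuinely nonvacuous rather than a tautology of the standing setup.
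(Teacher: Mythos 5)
Your proposal is correct and follows the same overall strategy as the paper's proof: express $16(A_{ijk})^2$ via Heron's formula, sandwich it using the per-edge distortion bounds with $E$ replacing each $\varepsilon_{st}$, collect the error terms into $H_1,H_2$, divide by $16$ and take square roots. The difference is purely in the algebraic bookkeeping. The paper keeps the factored form $S = s(s-2dp_{ij})(s-2dp_{ik})(s-2dp_{jk})$, bounds each factor by $s'E_{\mp}-2dq\cdot E_{\pm}$, and expands the resulting product $(\alpha_1\pm\beta_1)(\alpha_2\pm\beta_2)(\alpha_3\pm\beta_3)$ with $\alpha_m=s'-2dq_{\cdot}$ and $\beta_m=(s'+2dq_{\cdot})E$; you instead expand Heron into the quartic $2a^2b^2+2b^2c^2+2a^2c^2-a^4-b^4-c^4$ and bound monomial by monomial. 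Both yield valid two-sided estimates with $H_1,H_2$ depending only on $E$ and the side lengths of $B_{i'j'k'}$ (your constants are different from, but no worse than, the paper's, and arguably cleaner to state in closed form). You also handle two points the paper is silent on: that $16(B_{i'j'k'})^2-H_1$ may be negative, so the lower square root must be truncated at $0$; and that the ``if and only if'' is problematic --- the paper, like your forward direction, only establishes that the distortion hypothesis \emph{implies} the area inequality, and proves nothing in the converse direction. Your observation that under the section's standing hypothesis the converse is essentially vacuous (the side-ratio bounds are already assumed, so the area condition is a necessary consequence rather than a sufficient criterion) is an accurate criticism of the theorem as stated, not a gap in your argument.
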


\begin{proof}
  Considering the area of the triangles defined by the point $p_i,p_j,p_k$ and the corresponding points $q_i',q_j',q_k'$, we define $\varepsilon_{ij-} := (1-\varepsilon_{ij})$, $\varepsilon_{ij+} := (1+\varepsilon_{ij})$, and get the following 3 inequalities for each triangle:
  $$dq_{i'j'} \cdot \varepsilon_{ij-} \leq dp_{ij} \leq dq_{i'j'} \cdot \varepsilon_{ij+}$$
  $$dq_{i'k'} \cdot \varepsilon_{ik-} \leq dp_{ik} \leq dq_{i'k'} \cdot \varepsilon_{ik+}$$
  $$dq_{j'k'} \cdot \varepsilon_{jk-} \leq dp_{jk} \leq dq_{j'k'} \cdot \varepsilon_{jk+}$$

  In order to simplify our computations we define:
  $$E := max\big\{\varepsilon_{st}|\{s,t\} \subseteq \{i,j,k\}\big\}$$
  $$E_{-}:=(1-E) \qquad E_{+}:=(1+E)$$
  $$\Longrightarrow dq_{s't'} \cdot E_{-} \leq dp_{st} \leq dq_{s't'} \cdot E_{+} \text{ , for all pairs \{s,t\}} \subseteq \{i,j,k\}$$
  and
  $$s := dp_{ij}+dp_{ik}+dp_{jk}$$
  $$s' := dq_{i'j'}+dq_{i'k'}+dq_{j'k'}$$
  \vspace{2mm}

  It then follows that for all pairs $\{s,t\} \subseteq \{i,j,k\}$, that
  $$(2dq_{s't'}) \cdot E_{-} \leq 2dp_{st} \leq (2dq_{s't'}) \cdot E_{+}$$
  $$(-2dq_{s't'}) \cdot E_{+} \leq -2dp_{st} \leq (-2dq_{s't'}) \cdot E_{-}$$
  and
  $$(dq_{i'j'}+dq_{i'k'}+dq_{j'k'}) \cdot E_{-} \leq dp_{ij}+dp_{ik}+dp_{jk} \leq (dq_{i'j'}+dq_{i'k'}+dq_{j'k'}) \cdot E_{+}$$
  $$s' \cdot E_{-} \leq s \leq s' \cdot E_{+}$$
  $$\Longrightarrow (s'\cdot E_{-} -2dq_{i'j'}\cdot E_{+}) \leq (s-2dq_{ij}) \leq (s'\cdot E_{+} -2dq_{i'j'}\cdot E_{-})$$
  \vspace{2mm}

  Taking advantage of the $\textit{triangle inequality}$, $s'\geq 2dq_{i'j'}$, we get the following $\textit{bounds}$:
  $$2dq_{i'j'}\cdot(E_{-} - E_{+}) \leq (s'\cdot E_{-} -2dq_{i'j'}\cdot E_{+}) \leq (s-2dq_{ij}) \leq (s'\cdot E_{+} -2dq_{i'j'}\cdot E_{-}) \leq 2s'\cdot(E_{+} - E_{-})$$
  $$2dq_{i'j'}\cdot(E_{-} - E_{+}) \leq  (s-2dq_{ij}) \leq 2s'\cdot(E_{+} - E_{-})$$
  $$(-4E)\cdot dq_{i'j'} \leq  (s-2dq_{ij}) \leq (4E)\cdot s'$$
  $$0 \leq  (s-2dq_{ij}) \leq (4E)\cdot s'$$
  \vspace{2mm}
  We know that the area of the triangle defined by the points in the configurations $P$ and $Q$ are respectively:
  $$A_{ijk} = \sqrt{\frac{s(\frac{s}{2}-dp_{ij})(\frac{s}{2}-dp_{ik})(\frac{s}{2}-dp_{jk})}{2}} = \frac{1}{4}\cdot\sqrt{s(s-2dp_{ij})(s-2dp_{ik})(s-2dp_{jk})}$$
  $$\Longrightarrow A_{ijk} = \frac{1}{4}\cdot\sqrt{S} \text{ for } S:= s(s-2dp_{ij})(s-2dp_{ik})(s-2dp_{jk})$$
  
  $$B_{i'j'k'} = \sqrt{\frac{s'(\frac{s'}{2}-dq_{i'j'})(\frac{s'}{2}-dq_{i'k'})(\frac{s'}{2}-dq_{j'k'})}{2}} = \frac{1}{4}\cdot\sqrt{s'(s'-2dq_{i'j'})(s'-2dq_{i'k'})(s'-2dq_{j'k'})}$$
  $$\Longrightarrow B_{i'j'k'} = \frac{1}{4}\cdot\sqrt{S'} \text{ for } S':= s'(s'-2dq_{i'j'})(s'-2dq_{i'k'})(s'-2dq_{j'k'})$$

  In order to be as precise as possible we don't undertake any simplifications, and  from the above inequalities considering the indices $\{i,j,k\}$ and $\{i',j',k'\}$, we get:
  $$s'\prod_{\iota'\neq\kappa'}(s'\cdot E_{-}-2dq_{\iota'\kappa'}\cdot E_{+}) \leq  s\prod_{\iota\neq\kappa}(s-2dp_{\iota\kappa}) \leq s'\prod_{\iota'\neq\kappa'}(s'\cdot E_{+}-2dq_{\iota'\kappa'}\cdot E_{-})$$
  \hrule
  $$s'\prod_{\iota'\neq\kappa'}[(s'-2dq_{\iota'\kappa'})-(s'+2dq_{\iota'\kappa'})\cdot E] \leq  s\prod_{\iota\neq\kappa}(s-2dp_{\iota\kappa}) \leq s'\prod_{\iota'\neq\kappa'}[(s'-2dq_{\iota'\kappa'})+(s'+2dq_{\iota'\kappa'})\cdot E]$$
  \hrule
  $$s'(\alpha_1-\beta_1)(\alpha_2-\beta_2)(\alpha_3-\beta_3) \leq s\prod_{\iota\neq\kappa}(s-2dp_{\iota\kappa}) \leq s'(\alpha_1+\beta_1)(\alpha_2+\beta_2)(\alpha_3+\beta_3)$$
  \hrule
  $$s'\Big[\alpha_1\alpha_2\alpha_3-[\alpha_3\beta_2(\alpha_1-\beta_1)+\alpha_1\beta_3(\alpha_2-\beta_2)+\alpha_2\beta_1(\alpha_3-\beta_3)]-\beta_1\beta_2\beta_3\Big] \leq s\prod_{\iota\neq\kappa}(s-2dp_{\iota\kappa}) \leq$$
  $$\leq s'\Big[\alpha_1\alpha_2\alpha_3+[\alpha_3\beta_2(\alpha_1+\beta_1)+\alpha_1\beta_3(\alpha_2+\beta_2)+\alpha_2\beta_1(\alpha_3+\beta_3)]+\beta_1\beta_2\beta_3\Big]$$
  \hrule
  $$S'-s'\Big[\alpha_3\beta_2(\alpha_1-\beta_1)+\alpha_1\beta_3(\alpha_2-\beta_2)+\alpha_2\beta_1(\alpha_3-\beta_3)+\beta_1\beta_2\beta_3\Big] \leq S \leq$$
  $$\leq S'+s'\Big[\alpha_3\beta_2(\alpha_1+\beta_1)+\alpha_1\beta_3(\alpha_2+\beta_2)+\alpha_2\beta_1(\alpha_3+\beta_3)+\beta_1\beta_2\beta_3\Big]$$
  \hrule
  $$S'-H_{1} \leq S \leq S'+H_{2}$$

  Comparing the areas of two corresponding triangles from the 2 point-configurations we then get:
  $$16\cdot (B_{i'j'k'})^2-H_{1} \leq 16\cdot (A_{ijk})^2 \leq 16\cdot (B_{i'j'k'})^2+H_{2} $$
  $$(B_{i'j'k'})^2-\frac{1}{16}\cdot H_{1} \leq (A_{ijk})^2 \leq (B_{i'j'k'})^2+\frac{1}{16}\cdot H_{2} $$
  $$\sqrt{(B_{i'j'k'})^2-\frac{1}{16}\cdot H_{1}} \leq A_{ijk} \leq \sqrt{(B_{i'j'k'})^2+\frac{1}{16}\cdot H_{2}}$$

\end{proof}

\subsection{Considering Areas Of Triangles - Part 2}
For areas of triangles for $\varepsilon$-distortions, we construct the sets of areas of the partitioned triangles as follows:
$$\mathcal{A}=\{A_{ijk} |  \forall \{i,j,k\}\subseteq\{1,...,n\}\}$$
$$\mathcal{B}=\{B_{i'j'k'} |  \forall \{i',j',k'\}\subseteq\{1,...,n\}\}$$

$$\mathcal{A}_1 = \big\{A_{ijk} | A_{ijk}\in \mathcal{A} \text{ w/ \textit{E} and } \exists B_{i'j'k'} \in \mathcal{B} \text{, s.t. } |\sqrt{(B_{i'j'k'})^2-\frac{H_{1}}{16}}| \leq A_{ijk} \leq |\sqrt{(B_{i'j'k'})^2+\frac{H_{2}}{16}}|\big\}$$
$$\mathcal{B}_1 = \big\{B_{i'j'k'} | B_{i'j'k'}\in \mathcal{B} \text{ and } \exists A_{ijk} \in \mathcal{A} \text{ w/ \textit{E}, s.t. } |\sqrt{(B_{i'j'k'})^2-\frac{H_{1}}{16}}| \leq A_{ijk} \leq |\sqrt{(B_{i'j'k'})^2+\frac{H_{2}}{16}}|\big\}$$

$$\mathcal{A}_2 = \mathcal{A} \backslash \mathcal{A}_1 \qquad \qquad \mathcal{B}_2 = \mathcal{B} \backslash \mathcal{B}_1$$

We then follow the exact same $\textit{10-step algorithm}$ to get the desired result for  $\varepsilon-$distortions, although now it is very unlikely that 2 or more triangles will have the exact same area.

\subsection{Areas Of Quadrilaterals For $\varepsilon$-distortions}
Just as above, for notational convenience we will be using the same notation used in section 3.1, as well as the fact that our sets will have the following property:
$$(1-\varepsilon_{ij}) \leq \frac{dp_{ij}}{dq_{i'j'}} = \frac{||p_i-p_j||}{||q_i'-q_j'||} \leq (1+\varepsilon_{ij}) \text{, } \forall \{i,j\} \subseteq \{1,...,n\}$$
rather than $dp_{ij} = dq_{i'j'} \Leftrightarrow ||p_i-p_j||=||q_i'-q_j'||$.
\vspace{5mm}

\begin{Thm}
  \textit{For our usual setup, it holds that for four points in our two point configurations $P$ and $Q$ with indices and areas $\{i,j,k,l\}$, $A_{ijkl}$ and $\{i',j',k',l'\}$, $B_{i'j'k'l'}$ respectively, the points can be mapped from $P$ to $Q$ through an $E$-distorted diffeomorphism if and only if}
  $$\sqrt{(B_{i'j'k'l'})^2\cdot(1+E^2)^2-\frac{\hat{H}_{2}}{16}} \leq A_{ijkl} \leq \sqrt{(B_{i'j'k'l'})^2\cdot(1+E^2)^2+\frac{\hat{H}_{2}}{16}}$$
  \textit{where $\hat{H}_{1},\hat{H}_{2}$ depend on $E := max\big\{\varepsilon_{st}|\{s,t\} \subseteq \{i,j,k,l\}\big\}$, \textit{and the elements of the distribution of distances of} $B_{i'j'k'l'}$}.

\end{Thm}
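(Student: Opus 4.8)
The plan is to follow the proof of the preceding theorem (Theorem~5.1) almost verbatim, replacing Heron's formula with the quadrilateral area formulas $A_{ijkl}=\tfrac14\sqrt{4r^2s^2-(a^2+c^2-b^2-d^2)^2}$ and $B_{i'j'k'l'}=\tfrac14\sqrt{4r'^2s'^2-(a'^2+c'^2-b'^2-d'^2)^2}$ of Section~3.3. As before, put $E:=\max\{\varepsilon_{st}\mid\{s,t\}\subseteq\{i,j,k,l\}\}$, $E_-:=1-E$, $E_+:=1+E$, so every one of the six distances satisfies $dq_{s't'}E_-\le dp_{st}\le dq_{s't'}E_+$; squaring and writing $E_\pm^2=(1+E^2)\pm 2E$, each squared distance obeys $(dq_{s't'})^2(1+E^2)-2E(dq_{s't'})^2\le(dp_{st})^2\le(dq_{s't'})^2(1+E^2)+2E(dq_{s't'})^2$. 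I also assume that the correspondence $\{i,j,k,l\}\leftrightarrow\{i',j',k',l'\}$ assigns diagonals to diagonals and non-adjacent edges to non-adjacent edges (the same combinatorial choice for both quadrilaterals), which is forced once $E$ is small enough; thus $r\leftrightarrow r'$, $s\leftrightarrow s'$, $\{a,c\}\leftrightarrow\{a',c'\}$, $\{b,d\}\leftrightarrow\{b',d'\}$.

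First I would estimate the diagonal product. Writing $r^2=(1+E^2)r'^2+\theta$ and $s^2=(1+E^2)s'^2+\eta$ with $|\theta|\le 2Er'^2$ and $|\eta|\le 2Es'^2$, expanding gives
$$4r^2s^2=4r'^2s'^2(1+E^2)^2+\rho,\qquad |\rho|\le 16E(1+E+E^2)\,r'^2s'^2,$$
and this is exactly where the leading factor $(1+E^2)^2$ in the statement originates. Next I would bound the signed ``defect'' $\Delta:=a^2+c^2-b^2-d^2$, along with its $Q$-analogue $\Delta':=a'^2+c'^2-b'^2-d'^2$ and $\Sigma':=a'^2+b'^2+c'^2+d'^2$: the squared-distance bounds yield $(1+E^2)\Delta'-2E\Sigma'\le\Delta\le(1+E^2)\Delta'+2E\Sigma'$, whence $\Delta^2\le\bigl((1+E^2)|\Delta'|+2E\Sigma'\bigr)^2$, and $\Delta^2\ge\bigl((1+E^2)|\Delta'|-2E\Sigma'\bigr)^2$ when this admissible interval for $\Delta$ avoids $0$ (otherwise only $\Delta^2\ge 0$).

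Combining the two estimates and using $16(B_{i'j'k'l'})^2=4r'^2s'^2-\Delta'^2$ to substitute $4r'^2s'^2=16(B_{i'j'k'l'})^2+\Delta'^2$, I obtain
$$16(A_{ijkl})^2=4r^2s^2-\Delta^2=16(B_{i'j'k'l'})^2(1+E^2)^2+\bigl[(1+E^2)^2\Delta'^2-\Delta^2+\rho\bigr].$$
It then remains to bound the bracket over the admissible ranges of $\Delta$ and $\rho$: its maximum I would call $\hat H_2$ and the negative of its minimum $\hat H_1$, each an explicit polynomial in $E$ and in $r',s',a',b',c',d'$ — i.e.\ in the elements of the distribution of distances of $B_{i'j'k'l'}$ — obtained by collecting the $|\rho|$-bound above with the identity $(1+E^2)^2\Delta'^2-\bigl((1+E^2)|\Delta'|\mp2E\Sigma'\bigr)^2=\pm4E(1+E^2)|\Delta'|\Sigma'-4E^2\Sigma'^2$. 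This produces $16(B_{i'j'k'l'})^2(1+E^2)^2-\hat H_1\le 16(A_{ijkl})^2\le 16(B_{i'j'k'l'})^2(1+E^2)^2+\hat H_2$; dividing by $16$ and taking square roots yields the displayed inequality. For the converse implication I would argue as in Section~2 and the preceding theorem: if the area of the $P$-quadrilateral lies in this window, one can pick six pairwise distances consistent with the $\varepsilon$-distortion constraints that realize it, and then the Procrustes alignment combined with the $\varepsilon$-diffeomorphism construction of [1] delivers the required map on these four points.

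The main obstacle is precisely the signed combination $a^2+c^2-b^2-d^2$: unlike Heron's formula, the quadrilateral area is not a product of individually nonnegative, monotonically bounded factors, so the extremal configurations for $\Delta$ must be analyzed separately at its upper and lower ends, and one must treat the case where the admissible interval for $\Delta$ straddles $0$ (so that the true minimum of $\Delta^2$ is $0$ rather than a squared endpoint). A secondary difficulty is the bookkeeping: $4r^2s^2$ and $\Delta^2$ are built from the same six distances, so bounding them independently is valid but loses sharpness, and keeping the constants $\hat H_1,\hat H_2$ both correct and ``as precise as possible'' in the sense of the preceding proof is where the real care goes.
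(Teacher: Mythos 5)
Your proposal follows essentially the same route as the paper's own proof: bound the diagonal product $4r^2s^2$ and the defect $S^2=(a^2+c^2-b^2-d^2)^2$ separately via the squared distortion bounds $E_\pm^2=(1+E^2)\pm 2E$, recombine to isolate the factor $(1+E^2)^2$ multiplying $4r'^2s'^2-S'^2=16(B_{i'j'k'l'})^2$, and absorb the remaining terms into $\hat H_1,\hat H_2$ before dividing by $16$ and taking square roots. In fact you are somewhat more careful than the paper on the one point where its argument is loose — the paper squares the two-sided bound on the signed quantity $S$ without accounting for the case where the admissible interval straddles $0$ (or where $S'<0$), which you flag and handle explicitly — and you also sketch the converse direction of the ``if and only if,'' which the paper's proof omits entirely.
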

\vspace{2mm}

\begin{proof}
  We consider our two n-point configurations $P=\{p_1,...,p_n\}$ and $Q=\{q_1,...,q_n\}$, and partition them into a total of $\binom{n}{4} \textit{ quadrilaterals }$, and take into account $\binom{4}{2}=6$ distances between our 4 points in each case. If we take the 4 points indexed $i,j,k,l$, we have the set of distances $\mathcal{DP}_{ijkl}=\{dp_{ij},dp_{ik},dp_{il},dp_{jk},dp_{jl},dp_{kl}\}$ and analogously $\mathcal{DQ}_{i'j'k'l'}=\{dq_{i'j'},dq_{i'k'},dq_{i'l'},dq_{j'k'},dq_{j'l'},dq_{k'l'}\}$ for our 2nd configuration. We also define $\varepsilon_{ij-} := (1-\varepsilon_{ij})$, $\varepsilon_{ij+} := (1+\varepsilon_{ij})$, and get the following 6 inequalities for each triangle:
  $$dq_{i'j'} \cdot \varepsilon_{ij-} \leq dp_{ij} \leq dq_{i'j'} \cdot \varepsilon_{ij+} \qquad \qquad dq_{j'k'} \cdot \varepsilon_{jk-} \leq dp_{jk} \leq dq_{j'k'} \cdot \varepsilon_{jk+}$$
  $$dq_{i'k'} \cdot \varepsilon_{ik-} \leq dp_{ik} \leq dq_{i'k'} \cdot \varepsilon_{ik+} \qquad \qquad dq_{j'l'} \cdot \varepsilon_{jl-} \leq dp_{jl} \leq dq_{j'l'} \cdot \varepsilon_{jl+}$$
  $$dq_{i'l'} \cdot \varepsilon_{il-} \leq dp_{il} \leq dq_{i'l'} \cdot \varepsilon_{il+} \qquad \qquad dq_{k'l'} \cdot \varepsilon_{kl-} \leq dp_{kl} \leq dq_{k'l'} \cdot \varepsilon_{kl+}$$

  Following a similar approach to what was shown previously, we define the following parameters and compute the areas:
  $$E := max\big\{\varepsilon_{st}|\{s,t\} \subseteq \{i,j,k,l\}\big\}$$
  $$E_{-}:=(1-E) \qquad E_{+}:=(1+E)$$
  $$\Longrightarrow dq_{s't'} \cdot E_{-} \leq dp_{st} \leq dq_{s't'} \cdot E_{+} \text{ , for all pairs \{s,t\}} \subseteq \{i,j,k,l\}$$
  and
  $$r:=\text{Diagonal}\{\mathcal{DP}_{ijkl} \} \qquad s:=\text{Diagonal}\{\mathcal{DP}_{ijkl}\backslash \{r\}\}$$
  $$\{a,b,c,d\}:=\mathcal{DP}_{ijkl}\backslash\{r,s\} \text{, where \textit{a,c} correspond to distances of edges which don't share a vertex}$$
  $$S:=(a^2+c^2-b^2-d^2) \qquad \tilde{S}:=(a^2+b^2+c^2+d^2)$$
  $$A_{ijkl} = \frac{1}{4}\sqrt{4r^2s^2-(a^2+c^2-b^2-d^2)^2} \Longrightarrow A_{ijkl} = \frac{1}{4}\sqrt{4r^2s^2-S^2}$$
  \hrule
  $$r':=\text{Diagonal}\{\mathcal{DQ}_{i'j'k'l'} \} \qquad s':=\text{Diagonal}\{\mathcal{DQ}_{i'j'k'l'}\backslash \{r'\}\}$$
  $$\{a',b',c',d'\}:=\mathcal{DQ}_{i'j'k'l'}\backslash\{r',s'\} \text{, where \textit{a',c'} correspond to distances of edges which don't share a vertex}$$
  $$S':=(a'^2+c'^2-b'^2-d'^2) \qquad \tilde{S'}:=(a'^2+b'^2+c'^2+d'^2)$$
  $$B_{i'j'k'l'} = \frac{1}{4}\sqrt{4r'^2s'^2-(a'^2+c'^2-b'^2-d'^2)^2} \Longrightarrow B_{i'j'k'l'} = \frac{1}{4}\sqrt{4r'^2s'^2-S'^2}$$
  \vspace{2mm}

  It then follows that for all pairs $\{s,t\} \subseteq \{i,j,k\}$
  $$(dq_{s't'})^2 \cdot (E_{-})^2 \leq (dp_{st})^2 \leq (dq_{s't'})^2 \cdot (E_{+})^2$$
  $$-(dq_{s't'})^2 \cdot (E_{+})^2 \leq -(dp_{st})^2 \leq -(dq_{s't'})^2 \cdot (E_{-})^2$$
  which imply that

  $$(r's')^2\cdot (E_{-})^4 \leq (rs)^2 \leq (r's')^2\cdot (E_{+})^4$$
  and
  $$\Big[(a'^2+c'^2)\cdot (E_{-})^2-(b'^2+d'^2)\cdot (E_{+})^2\Big] \leq (a^2+c^2-b^2-d^2) \leq \Big[(a'^2+c'^2)\cdot (E_{+})^2-(b'^2+d'^2)\cdot (E_{-})^2\Big]$$
  \hrule
  $$\Big[(a'^2+c'^2)\cdot (1-2E+E^2)-(b'^2+d'^2)\cdot (1+2E+E^2)\Big] \leq (a^2+c^2-b^2-d^2) \leq$$
  $$\leq \Big[(a'^2+c'^2)\cdot (1+2E+E^2)-(b'^2+d'^2)\cdot (1-2E+E^2)\Big]$$
  \hrule
  $$\Big[(a'^2+c'^2-b'^2-d'^2)\cdot(1+E^2)-2E\cdot(a'^2+c'^2+b'^2+d'^2)\Big] \leq (a^2+c^2-b^2-d^2) \leq$$
  $$\leq \Big[(a'^2+c'^2-b'^2-d'^2)\cdot(1+E^2)+2E\cdot(a'^2+c'^2+b'^2+d'^2)\Big]$$
  \hrule
  $$\Big[S'\cdot(1+E^2)-\tilde{S'}\cdot(2E)\Big] \leq S \leq \Big[S'\cdot(1+E^2)+\tilde{S'}\cdot(2E)\Big]$$
  \hrule
  $$-\Big[S'\cdot(1+E^2)+\tilde{S'}\cdot(2E)\Big]^2 \leq -S^2 \leq -\Big[S'\cdot(1+E^2)-\tilde{S'}\cdot(2E)\Big]^2$$
  \hrule
  $$-S'^2\cdot(1+E^2)^2-\Big[\tilde{S'}\cdot(2E)\cdot[2E\tilde{S'}+S'(1+E^2)]\Big] \leq -S^2 \leq$$
  $$\leq -S'^2\cdot(1+E^2)^2-\Big[\tilde{S'}\cdot(2E)\cdot[2E\tilde{S'}-S'(1+E^2)]\Big]$$
  \hrule
  $$-S'^2\cdot(1+E^2)^2-H_1 \leq -S^2 \leq -S'^2\cdot(1+E^2)^2-H_2$$

  Comparing the areas of two corresponding quadrilaterals from the 2 point-configurations we then get:
  $$4(r's')^2\cdot(E_{-})^4-S'^2\cdot(1+E^2)^2-H_1 \leq 4(rs)^2-S^2 \leq 4(r's')^2\cdot(E_{+})^4 -S'^2\cdot(1+E^2)^2-H_2$$
  \hrule
  $$4(r's')^2\cdot[(1+E^2)^2-4E(1-E+E^2)]-S'^2\cdot(1+E^2)^2-H_1 \leq 4(rs)^2-S^2 $$
  $$\leq 4(r's')^2\cdot[(1+E^2)^2+4E(1+E+E^2)] -S'^2\cdot(1+E^2)^2-H_2$$
  \hrule
  $$\Big[4(r's')^2-S'^2\Big]\cdot(1+E^2)^2-\Big[16(r's')^2\cdot(E-E^2+E^3)+H_1\Big] \leq 4(rs)^2-S^2 \leq$$
  $$\leq \Big[4(r's')^2-S'^2\Big]\cdot(1+E^2)^2+\Big[16(r's')^2\cdot(E+E^2+E^3)-H_2\Big]$$
  \hrule
  $$\Big[4(r's')^2-S'^2\Big]\cdot(1+E^2)^2-\hat{H}_{1} \leq 4(rs)^2-S^2 \leq \Big[4(r's')^2-S'^2\Big]\cdot(1+E^2)^2+\hat{H}_{2}$$
  \hrule
  $$(B_{i'j'k'l'})^2\cdot(1+E^2)^2-\frac{\hat{H}_{1}}{16} \leq (A_{ijkl})^2 \leq B_{i'j'k'l'}^2\cdot(1+E^2)^2+\frac{\hat{H}_{2}}{16}$$

  \hrule
  $$\sqrt{(B_{i'j'k'l'})^2\cdot(1+E^2)^2-\frac{\hat{H}_{1}}{16}} \leq A_{ijkl} \leq \sqrt{(B_{i'j'k'l'})^2\cdot(1+E^2)^2+\frac{\hat{H}_{2}}{16}}$$

\end{proof}

\subsection{Considering Areas Of Quadrilaterals - Part 2}
For areas of quadrilaterals for $\varepsilon$-distortions, we construct the sets of areas of the partitioned quadrilaterals as follows:

$$\mathcal{A}=\big\{A_{ijkl} |  \forall \{i,j,k,l\}\subseteq\{1,...,n\}\big\}$$
$$\mathcal{B}=\big\{B_{i'j'k'l'} |  \forall \{i',j',k',l
'\}\subseteq\{1,...,n\}\big\}$$

$$\mathcal{A}_1 = \big\{ A_{ijkl} | A_{ijkl}\in \mathcal{A} \text{ w/ \textit{E} and } \exists B_{i'j'k'l'} \in \mathcal{B} \text{, s.t. } |\sqrt{(B_{i'j'k'l'})^2\cdot(1+E^2)^2-\frac{\hat{H}_{1}}{16}}|$$
$$\leq A_{ijkl} \leq |\sqrt{(B_{i'j'k'l'})^2\cdot(1+E^2)^2+\frac{\hat{H}_{2}}{16}}|\big\}$$

$$\mathcal{B}_1 = \big\{B_{i'j'k'l'} | B_{i'j'k'l'}\in \mathcal{B} \text{ and } \exists A_{ijkl} \in \mathcal{A} \text{ w/ \textit{E}, s.t. } |\sqrt{(B_{i'j'k'l'})^2\cdot(1+E^2)^2-\frac{\hat{H}_{1}}{16}}|\leq$$
$$\leq A_{ijkl} \leq |\sqrt{(B_{i'j'k'l'})^2\cdot(1+E^2)^2+\frac{\hat{H}_{2}}{16}}|\big\}$$

$$\mathcal{A}_2 = \mathcal{A} \backslash \mathcal{A}_1 \qquad \qquad \mathcal{B}_2 = \mathcal{B} \backslash \mathcal{B}_1$$

We then follow the exact same $\textit{10-step algorithm}$ to get the desired result for  $\varepsilon-$distortions, although now it is very unlikely that 2 or more quadrilaterals will have the exact same area.

\section{Reconstruction From Distances}

\subsection{One-Sided Error Algorithm}

We want to see how likely it is to $\textit{Construct Point Configurations}$, given the distance distributions. See [8] for more on measuring the distance between 2D point sets.\\

Using [3] Theorem 1.3 and considering $n$-\textit{point} configurations, we can select $\binom{n}{3}$ different sets for $\{i_0,i_1,i_2\}$, $\binom{n-3}{2}$  for $\{j_1,j_2\}$, $\binom{n-5}{2}$  for $\{k_1,k_2\}$, $\binom{n-7}{2}$  for $\{l_1,l_2\}$ and $\binom{n-9}{2}$  for $\{m_1,m_2\}$.

The total number of possible such $collections$ is:
\begin{align*}
  \binom{n-3}{2}\cdot\prod_{\iota=1}^4\binom{n-3-2\iota}{2} &= \frac{n(n-1)(n-2)}{3!}\cdot\prod_{\iota=1}^4\frac{(n-2\iota-1)(n-2\iota-2)}{2!}\\
  &=\frac{n!}{(n-11)!}\cdot\frac{1}{96}
\end{align*}
Define $N:=\frac{n!}{(n-11)!}\cdot\frac{1}{96}$.\\

\begin{figure}[h]
  \centerline{\includegraphics[width=16cm]{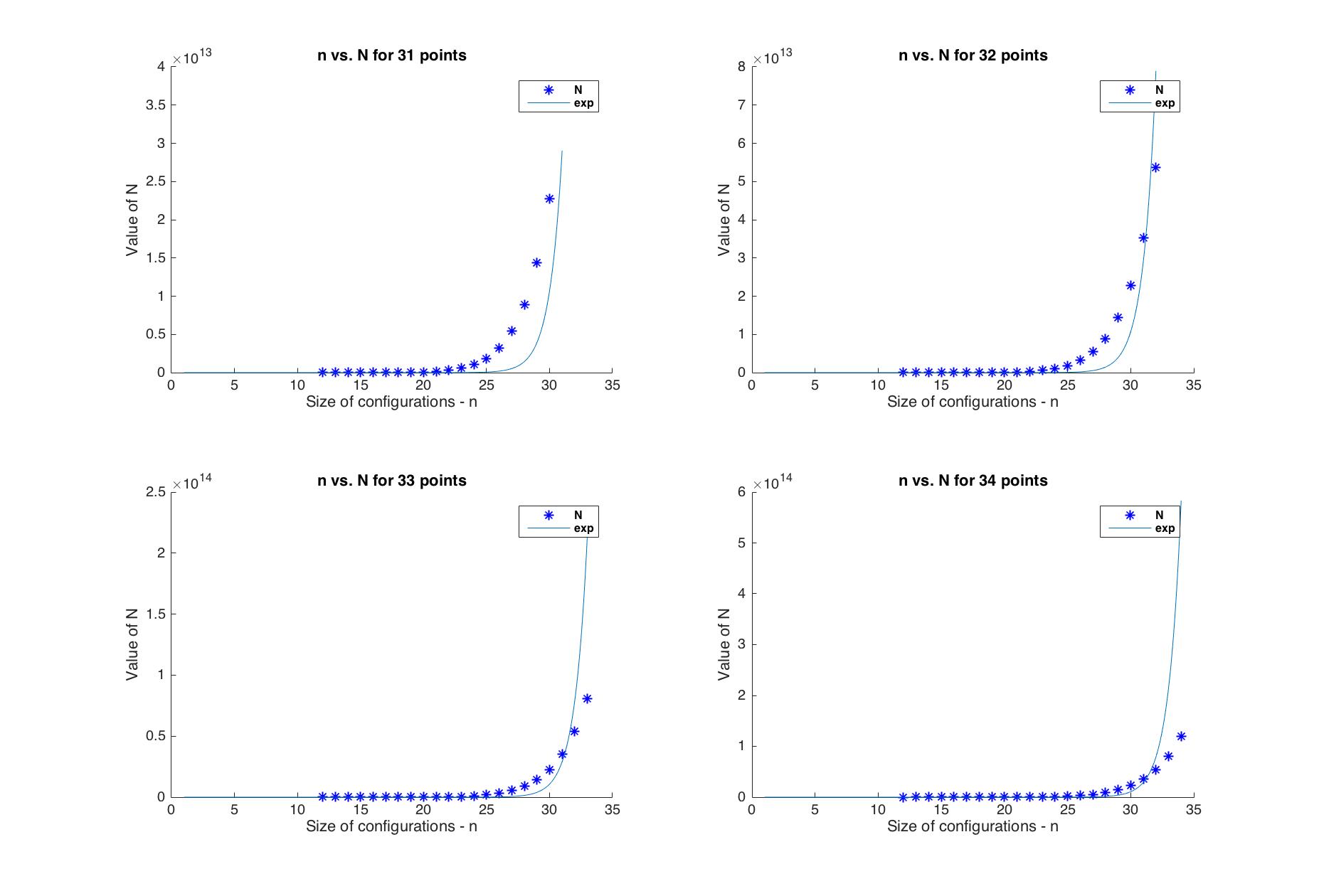}}
  \caption{$N$ tends to behave like $e^n$, as $n$ approaches 33}
\end{figure}

As shown above, the number of 11-tuples one has to check is very large and not practical even though it's relatively easy to implement.
An alternative way to check if our configuration is in fact reconstructible from distances is to run a one sided-error algorithm.

\pagebreak
We make use of the polynomial defined in [3] which takes as inputs 6 distances:

$g(U,V,W,X,Y,Z) := \Big[ 2U^2Z+2UZ^2+2V^2Y+2VY^2+2X^2W+2XW^2+2UVX+2UYW+2VWZ+2XYZ\Big]
+\Big[-2UVY-2UVZ-2UXW-2UXZ-2UYZ-2UWZ-2VXY-2VXW-2VYW-2VYZ-2XYW-2XWZ\Big]$\\
and the following facts:
\begin{itemize}
\item $g(U,V,W,X,Y,Z)=0$ \textit{if and only if} the inputs are the sides and diagonals of a well-defined quadrilateral
\item if $g(d_{\{i_0,i_1\}},d_{\{i_0,i_2\}},d_{\{j_1,j_2\}},d_{\{k_1,k_2\}},d_{\{l_1,l_2\}},d_{\{m_1,m_2\}})\neq0$ for all such 11-tuples in $P$, then $P$ is reconstructible from distances
\end{itemize}
\vspace{5mm}

For the one-sided error algorithm we \textit{assume} that $P$ \textbf{is reconstructible} from distances, and select at random an 11-tuple from $P$. If for the given 11-tuple we get that $g(d_{\{i_0,i_1\}},...,d_{\{m_1,m_2\}})=0$, we conclude that $P$ is in fact \textbf{not reconstructible from distances}. The issue here is that we may falsely conclude that $P$ is reconstructible from distances, with an error of $\frac{|\mathcal{K}_1|}{N}$, where:
$$\mathcal{K} = \Big\{\{i_0,i_1,...,m_2\}|\{i_0,i_1,...,m_2\}\subseteq P\Big\} \qquad \qquad |\mathcal{K}|=N$$
$$\mathcal{K}_1 = \Big\{\{i_0,i_1,...,m_2\}|g(d_{\{i_0,i_1\}},d_{\{i_0,i_2\}},d_{\{j_1,j_2\}},d_{\{k_1,k_2\}},d_{\{l_1,l_2\}},d_{\{m_1,m_2\}}) \neq 0 \Big\}$$
$$\mathcal{K}_2 = \Big\{\{i_0,i_1,...,m_2\}|g(d_{\{i_0,i_1\}},d_{\{i_0,i_2\}},d_{\{j_1,j_2\}},d_{\{k_1,k_2\}},d_{\{l_1,l_2\}},d_{\{m_1,m_2\}}) = 0 \Big\}$$\\

This obvious depends purely on $P$ and the cardinality of $\mathcal{K}_2$, and there's nothing that can be said about it a priori. So depending on how many such \textit{bad 11-tuples} exist in $P$, we will either have a large or a small error. In order to reduce this we can randomly select $x$ such tuples and check all of if them, and if at least one satisfies
$g(d_{\{i_0,i_1\}},...,d_{\{m_1,m_2\}})=0$, we safely conclude that $P$ is not reconstructible from distances. In the case where all the tuples we selected lie in $\mathcal{K}_1$ we will have a false conclusion, where the error will be $\Big(\frac{|\mathcal{K}_1|}{N}\Big)^x\ll\frac{|\mathcal{K}_1|}{N}$.

\subsection{Generalizing The Results Of [3] For $\varepsilon$-distortions}

\begin{Thm}
  \textit{For a generic} $P,Q \subset \R^2 $, \textit{where the following conditions hold:}
  \begin{itemize}
  \item \textit{if} $dist(P)=dist(Q)$, \textit{where} $dist(R)=\{dist(r_i,r_j)|r_i,r_j\in R\subset \R^2\}$
  \item \textit{all} $\binom{n}{2}$ \textit{distances} $dist(p_i,p_j)$ \textit{are distinct and} $g(U,V,W,X,Y,Z)=0$
  \item \textit{if} $\{e_1,...,e_6\}\in E$ \textit{for} $E:=\{(p_i,p_j)|i,j\in\{i,...,n\}\}$, \textit{are not the diagonals of a quadrilateral
    then} $g(e_1,...,e_6)\neq0$
  \end{itemize}
  \textit{then} $P\cong Q$.\\
\end{Thm}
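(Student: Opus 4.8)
The plan is to reduce the conclusion $P\cong Q$ to the \emph{reconstructibility of $P$ from its distance set}, and then to exploit the hypothesis $dist(P)=dist(Q)$ directly. First I would observe that, since all $\binom{n}{2}$ pairwise distances of $P$ are distinct and $dist(P)=dist(Q)$ as sets, the distances of $Q$ are distinct as well, and the set equality produces a canonical bijection $\sigma\colon\binom{[n]}{2}\to\binom{[n]}{2}$ with $d(p_i,p_j)=d(q_a,q_b)$ exactly when $\sigma(\{i,j\})=\{a,b\}$. The real content is to upgrade $\sigma$ from a bijection on pairs to a \emph{relabelling} in the sense of Definition 3.1, i.e.\ a permutation $\pi\in S_n$ of the vertices with $d(p_i,p_j)=d(q_{\pi(i)},q_{\pi(j)})$ for \emph{all} $i,j$; once that is in hand, the Procrustes problem of Section 2 immediately supplies $\varphi\in A(2)$ --- an orthogonal map followed by a translation --- carrying $P$ onto $Q$, which is precisely $P\cong Q$.

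The core step is therefore to prove that $\sigma$ is a relabelling, and this is where the polynomial $g$ and genericity enter. Recall from Section 6.1 that $g(U,V,W,X,Y,Z)=0$ if and only if its arguments are the four sides and two diagonals of a genuine quadrilateral; the second and third hypotheses of the theorem then say exactly that every $6$-tuple of distances occurring in $P$ which is \emph{not} of quadrilateral type has $g\neq 0$, which by [3] Theorem 1.3 is the genericity condition guaranteeing that $P$ is reconstructible from distances. Concretely, for a fixed pair $\{i,j\}$ one uses the vanishing locus of $g$ among the available distances to detect the quadrilaterals in which $\{i,j\}$ occurs as a diagonal; because all distances are distinct, each such quadrilateral is rigid and its labelling is forced, and gluing these rigid pieces along shared edges determines the whole metric configuration up to isometry. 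Applying the same reconstruction to the distances of $Q$ transported through $\sigma$ shows that the two reconstructions coincide, so $\sigma$ must be induced by a single $\pi\in S_n$.

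To convert this into a clean verification I would split on the size of the configuration. For $n\neq 4$, Proposition 3.1 reduces the claim to checking that $\sigma\cdot\{i,j\}\cap\sigma\cdot\{i,k\}\neq\emptyset$ for all distinct $i,j,k$, and the quadrilateral-reconstruction argument above gives this: the triangle on $\{i,j,k\}$ has, by distinctness of its three edges, a unique realization in $Q$, which forces the three edge-images to meet pairwise at the images of $i$, $j$, $k$. The case $n=4$ must be handled separately, since it is exactly the regime in which the distribution of distances can fail to determine the configuration (the counterexample of Section 3.1); but here distinctness of the six distances together with the quadrilateral condition on $g$ identifies which pair is the long diagonal and hence pins down the labelling, so $\sigma$ is again a relabelling. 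In all cases we obtain $\pi\in S_n$ with $d(p_i,p_j)=d(q_{\pi(i)},q_{\pi(j)})$ for all $i,j$, and Section 2 then gives $P\cong Q$; the $\varepsilon$-distorted analogue follows by the same scheme, replacing the exact equalities by the interval bounds of Theorems 5.1 and 5.2.

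The step I expect to be the main obstacle is precisely the globalization in the second paragraph: showing that the local, $g$-detected quadrilateral reconstructions assemble consistently through the abstract bijection $\sigma$, with no residual reflection or relabelling ambiguity, and making ``generic'' precise enough (a Zariski-open, full-measure condition on the coordinates, as in [3]) for that argument to be valid. The $n=4$ exceptional case is a second, genuinely delicate point, since one must argue by hand that the stated hypotheses exclude the known Boutin-type examples; the remaining bookkeeping --- extracting $\pi$ and invoking the Procrustes theorem --- is routine.
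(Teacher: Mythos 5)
The paper does not actually prove this theorem: it is quoted verbatim (in slightly garbled form) from reference [3] (Boutin--Kemper, \emph{Which Point Configurations are Determined by the Distribution of their Pairwise Distances?}), and the authors move straight on to their $\varepsilon$-distorted analogue, whose proof only derives the perturbed inequalities for $g$ and never revisits the reconstruction argument. So there is no in-paper proof to compare against; what can be judged is whether your proposal stands on its own as a proof of the cited result.

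As it stands it does not, and the gap is exactly where you flag it, but it is worth being precise about why the flagged step is not a detail. Your argument that $\sigma\cdot\{i,j\}\cap\sigma\cdot\{i,k\}\neq\emptyset$ rests on the claim that ``the triangle on $\{i,j,k\}$ has, by distinctness of its three edges, a unique realization in $Q$.'' This begs the question: the three lengths $d(p_i,p_j)$, $d(p_i,p_k)$, $d(p_j,p_k)$ certainly reappear in $Q$ as the edges $\sigma\{i,j\}$, $\sigma\{i,k\}$, $\sigma\{j,k\}$, but nothing yet forces those three edges of $Q$ to form a triangle rather than, say, three pairwise disjoint segments or two segments meeting plus one disjoint one. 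Establishing that they must share vertices is the entire content of the Boutin--Kemper argument, and it is where the polynomial $g$ actually gets used: one shows that if $\sigma\{i,j\}$ and $\sigma\{i,k\}$ were disjoint, the resulting $6$-tuples of distances would be forced to satisfy $g=0$ without being the sides and diagonals of a planar quadrilateral, contradicting the third hypothesis; carrying this out requires auxiliary points (this is why the genericity condition in [3] is phrased over $11$-tuples, as the paper itself notes in Section 6.1), not just the three points $i,j,k$. Your sketch never produces such a $6$-tuple or applies the $g\neq 0$ hypothesis to it, so the ``quadrilateral-reconstruction argument'' is named but not performed. The $n=4$ case is likewise asserted rather than argued, and it is precisely the regime of the paper's own counterexample in Section 3.1, so it cannot be waved through. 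The outer scaffolding --- distinctness of distances giving the pair bijection $\sigma$, Proposition 3.1 reducing relabelling to the pairwise-intersection condition, and the Procrustes problem converting a relabelling into $P\cong Q$ --- is correct and matches how one would assemble the cited ingredients, but the theorem's substance lives in the step you have left open.
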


\begin{Def}
  \textit{By} $dist(P)\approx dist(Q)$, \textit{we mean that for each element in} $dist(P)$ \textit{there exists only one element in} $dist(Q)$, \textit{such that} $(1-\varepsilon)\leq \frac{dp_{ij}}{dq_{i'j'}}\leq (1+\varepsilon)$ \textit{and vice versa}.
\end{Def}

For our case, we generalize the theorem as follows:\\

\begin{Thm}
  \textit{For a generic $P,Q \subset \R^2 $, if $\exists$ $T \in A(D)$ such that $|T(P)-Q|<\varepsilon$, given $\varepsilon>0$, then the following inequalities hold:}
  \begin{itemize}

  \item \textit{if} $dist(P)\approx dist(Q)$, \textit{where} $dist(R)=\{dist(r_i,r_j)|r_i,r_j\in R\subset \R^2\}$.
  \item \textit{all} $\binom{n}{2}$ \textit{distances} $dist(p_i,p_j)$ \textit{are distinct and} $$\big[g(U',...,Z')\cdot(1+3\varepsilon^2) - H\big] \leq g(U,...,Z) \leq \big[g(U',...,Z')\cdot(1+3\varepsilon^2) + H\big]$$
    \textit{where $H$ depends on $\varepsilon$, the polynomial $g(\cdot)$, and the distances $\{U',...,Z'\}$}.
  \item \textit{if $\{e_1,...,e_6\}\in E$ for $E:=\{(p_i,p_j)|i,j\in\{i,...,n\}\}$ are not the diagonals of a quadrilateral, then} $$ g(e_1,...,e_6) \notin \bigg[\Big[ \big(g'_1\cdot(1-\varepsilon)^3 + g'_2\cdot(1+\varepsilon)^3\Big],\Big[ \big(g'_1\cdot(1+\varepsilon)^3 + g'_2\cdot(1-\varepsilon)^3\Big]\bigg].$$
  \end{itemize}
\end{Thm}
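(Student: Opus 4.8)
The plan is to lift, to the level of the degree-three polynomial $g$ in six distance variables from $[3]$, exactly the monomial-by-monomial error propagation used in the proofs of Theorems 5.1 and 5.2. I would organize the argument around the three displayed conditions, deriving each from the hypothesis that some $T\in A(D)$ satisfies $|T(P)-Q|<\varepsilon$, and then closing as in Theorem 6.1.

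For the first condition, I would argue that for generic $P,Q$ with all $\binom n2$ pairwise distances distinct, an $\varepsilon$-close affine alignment forces a unique pairing of the distances of $P$ with those of $Q$ and, after normalizing the dilation part of $T$, the two-sided multiplicative bounds
$$(1-\varepsilon)\le \frac{dp_{ij}}{dq_{i'j'}}=\frac{\|p_i-p_j\|}{\|q_{i'}-q_{j'}\|}\le (1+\varepsilon)\qquad\text{for all }\{i,j\}\subseteq\{1,\dots,n\},$$
which is precisely $dist(P)\approx dist(Q)$ in the sense of Definition 6.2. I expect this to be the main obstacle: a general element of $A(D)$ can distort distances by an arbitrary amount, so converting ``$T(P)$ is $\varepsilon$-close to $Q$'' into a clean ratio bound genuinely uses genericity together with the Procrustes structure (that, up to scale, $T$ is pinned close to an isometry). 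Everything downstream of this step is bookkeeping.

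For the second condition I would split $g=g_1-g_2$, where $g_1$ is the sum of the ten monomials of $g$ with coefficient $+2$ and $g_2$ is the sum, taken with positive sign, of the twelve monomials with coefficient $-2$; since $g$ is homogeneous of degree $3$ and all inputs are nonnegative distances, $g_1$ and $g_2$ have nonnegative coefficients and nonnegative values. Each monomial is a product of three of the six input distances (with multiplicity), so the ratio bounds of the first step contribute, monomial by monomial, a factor between $(1-\varepsilon)^3$ and $(1+\varepsilon)^3$ relative to the corresponding $Q$-value; summing over $g_1$ and over $g_2$ separately and subtracting yields
$$g_1'(1-\varepsilon)^3-g_2'(1+\varepsilon)^3\ \le\ g(U,\dots,Z)\ \le\ g_1'(1+\varepsilon)^3-g_2'(1-\varepsilon)^3,$$
with $g_1':=g_1(U',\dots,Z')$ and $g_2':=g_2(U',\dots,Z')$. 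Using the parity split $(1\pm\varepsilon)^3=(1+3\varepsilon^2)\pm(3\varepsilon+\varepsilon^3)$, the upper endpoint becomes $(g_1'-g_2')(1+3\varepsilon^2)+(g_1'+g_2')(3\varepsilon+\varepsilon^3)=g(U',\dots,Z')(1+3\varepsilon^2)+H$ with $H:=(g_1'+g_2')(3\varepsilon+\varepsilon^3)$, and the lower endpoint is its mirror image; this is the claimed inequality, and $H$ depends only on $\varepsilon$, on $g$, and on $\{U',\dots,Z'\}$.

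For the third condition I would run the identical propagation in the reconstruction direction. By $[3]$ Theorem 1.3 (restated here as Theorem 6.1), $g$ of six distances vanishes iff they are the sides and diagonals of an honest quadrilateral; so if a sextuple $\{e_1,\dots,e_6\}$ of $P$-edges is paired with a vanishing $Q$-value of $g$, then substituting $g(U',\dots,Z')=0$ (equivalently $g_1'=g_2'$) into the two-sided bound of the previous paragraph confines $g(e_1,\dots,e_6)$ to a band with endpoints $g_1'(1-\varepsilon)^3-g_2'(1+\varepsilon)^3$ and $g_1'(1+\varepsilon)^3-g_2'(1-\varepsilon)^3$, i.e. the interval exhibited in the statement. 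Contrapositively, if $\{e_1,\dots,e_6\}$ are \emph{not} the edges of a quadrilateral then the corresponding $g$-value is forced outside this band, which is the $\varepsilon$-robust replacement for ``$g\ne 0$''; combined exactly as in Theorem 6.1 this yields $P\cong Q$. The one genuinely new check is that the exclusion band collapses to $\{0\}$ as $\varepsilon\to 0$, which is immediate from $(1\pm\varepsilon)^3=(1+3\varepsilon^2)\pm(3\varepsilon+\varepsilon^3)$, so the bound degenerates to the exact statement in the limit.
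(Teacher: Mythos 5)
Your proposal follows essentially the same route as the paper's proof: take the ratio bounds of Definition 6.2 as the content of the first bullet, propagate them monomial-by-monomial through the degree-three polynomial $g$ split into its positive and negative parts, recombine via $(1\pm\varepsilon)^3=(1+3\varepsilon^2)\pm(3\varepsilon+\varepsilon^3)$ to get the stated band with $H$ depending on $\varepsilon$ and $g_1',g_2'$, and obtain the third bullet by taking the complement of that interval. Your sign conventions (defining $g_2$ with positive coefficients so that $H=(g_1'+g_2')(3\varepsilon+\varepsilon^3)\ge 0$) are in fact cleaner than the paper's, which keeps $g_1\le 0$ and consequently writes some of the endpoint inequalities with the bounds interchanged.
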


\begin{proof}
  We show the derivation of the analogous conditions for the $\varepsilon$-distortions

  \begin{itemize}
  \item The first bullet-point is essentially what we want for the $\varepsilon$-distortions.

  \item We have:
    \begin{align*}
      g(U,V,W,X,Y,Z) &:= \Big[ 2U^2Z+2UZ^2+2V^2Y+2VY^2+2X^2W+2XW^2\\
        &+2UVX+2UYW+2VWZ+2XYZ\Big]\\
      &+\Big[-2UVY-2UVZ-2UXW-2UXZ-2UYZ-2UWZ\\
        &-2VXY-2VXW-2VYW-2VYZ-2XYW-2XWZ\Big]\\
      &=\Big[g_2\Big]+\Big[g_1\Big]
    \end{align*}
    and we know that select our 6-\textit{distance} collections, in order to satisfy $$\forall \alpha \in \{U,V,W,X,Y,Z\}\subseteq dist(P) \text{, } \exists \alpha' \in \{U',V',W',X',Y',Z'\}\subseteq dist(Q),$$ $$\text{s.t. } (1-\varepsilon)\cdot \alpha' \leq \ \alpha \leq (1+\varepsilon)\cdot \alpha'$$
    W.L.O.G., we label and reorder our $6-distance$ collections in the following way:
    $$\{U,V,X,X,Y,Z\} \mapsto \{\alpha_1,\alpha_2,\alpha_3,\alpha_4,\alpha_5,\alpha_6\}\subseteq dist(P)$$
    $$\{U',V',X',X',Y',Z'\} \mapsto \{\alpha_1',\alpha_2',\alpha_3',\alpha_4',\alpha_5',\alpha_6'\}\subseteq dist(Q)$$
    We then get the following inequalities:
    $$\forall I\subset\{\alpha_1,...,\alpha_6\} \text{ s.t. } \Big(2\prod_{i\in I}{a_{i}}\Big) \text{ is a term of } g(\cdot), \text{ and the corresponding \textit{I'} to \textit{I}} $$
    $$\Big(2\prod_{i'\in I'}{a'_{i'}}\Big)\cdot(1-\varepsilon)^3 \leq \Big(2\prod_{i\in I}{a_{i}}\Big) \leq \Big(2\prod_{i'\in I'}{a'_{i'}}\Big)\cdot(1+\varepsilon)^3$$
    $$-\Big(2\prod_{i'\in I'}{a'_{i'}}\Big)\cdot(1+\varepsilon)^3 \leq -\Big(2\prod_{i\in I}{a_{i}}\Big) \leq -\Big(2\prod_{i'\in I'}{a'_{i'}}\Big)\cdot(1-\varepsilon)^3$$

    $$ \Longrightarrow g'_2\cdot(1+\varepsilon)^3 \leq g_2 \leq g'_2\cdot(1-\varepsilon)^3$$

    $$ \Longrightarrow \big[g'_1\cdot(1-\varepsilon)^3 + g'_2\cdot(1+\varepsilon)^3\big] \leq g(U,...,Z) \leq \big[g'_1\cdot(1+\varepsilon)^3 + g'_2\cdot(1-\varepsilon)^3\big]$$
    \hrule
    $$ \big[g(U',...,Z')\cdot(1+3\varepsilon^2) + (g'_2-g'_1)\cdot(3\varepsilon+\varepsilon^3)\big] \leq g(U,...,Z) \leq$$
    $$\leq \big[g(U',...,Z')\cdot(1+3\varepsilon^2) + (g'_1-g'_2)\cdot(3\varepsilon+\varepsilon^3)\big]$$
    \hrule
    $$ g(U',...,Z')\cdot(1+3\varepsilon^2) - H \leq g(U,...,Z) \leq g(U',...,Z')\cdot(1+3\varepsilon^2) + H$$
    $$\text{where } H=(g'_1-g'_2)\cdot(3\varepsilon+\varepsilon^3)$$

  \item Follow the same steps shown in the proof of the second inequality, with the only difference that the inequality signs are switched, since we want $\textit{not almost equality}$. So if in the proof above we had $\beta_1 \leq \alpha \leq \beta_2$, we would now use : $\alpha < \beta_1 \text{ or } \alpha > \beta_2$ $\Leftrightarrow \alpha \notin [\beta_1,\beta_2].$

  \end{itemize}
\end{proof}


\subsection{Construction Of Points Given Distance Distribution}

\textbf{Question :}
\textit{Given $dist(P)$ and the total volume $V$ of the convex set of points $P\subset \R^3$, with $P$ unique, how do we reconstruct $P\subset\R^3$ upto a rigid motion?}\\

In order to solve this problem, we can take the following steps, which directly relate to the $\textit{10-step algorithm}$:
\begin{enumerate}
\item List the distances in increasing order, and call this ordered list $D$, with $E:=|D|=\binom{n}{2}$. All $\binom{n}{2}$ edges correspond to a side of $\binom{n}{3}$ triangles of the $P$ configuration.
\item Find all possible triangles for $d_i\in D$ in increasing order, with $d_i$ being the smallest edge of the triangle. So essentially:

  \fbox{\begin{minipage}{20em}
      \begin{algorithm}[H]
        $i\gets1$\;
        $h\gets1$\;
        \While{$i\leq (E-3)$}{
          take all $\binom{n-i}{2}$ ordered 3-tuples $\{d_i,d_{j>i},d_{E\geq k>j}\}$\;
          \eIf{$(d_k-d_j)\geq d_i$}{
            $\mathcal{T}_h^{(i)}\gets\{d_i,d_j,d_k\}$\;
            $\mathcal{T}^{(i)} = \{\mathcal{T}^{(i)}, \mathcal{T}_h^{(i)}\}$\;
            $h\gets h+1$\;
          }{
            disregard 3-tuple $\{d_i,d_j,d_k\}$\;
          }
          $i\gets i+1$\;
        }
        $\mathcal{T}^{(i)} \gets \bigcup_{i=1}^{E-3} \mathcal{T}^{(i)}$\;
        $T \gets |\mathcal{T}^{(i)}|$\;
      \end{algorithm}
    \end{minipage}}

    So $\mathcal{T}$ would look like $\mathcal{T}=\big\{\{d_1,d_2,d_3\},\{d_1,d_4,d_5\},\{d_2,d_4,d_5\},...\big\}$ for instance, which is still ordered in ascending order of the first element of the 3-tuple, then the second element and then the third element.

  \item Again in ascending order of $d_i\in D$ take all triangle 3-tuples $\mathcal{T}_{\iota}\in\mathcal{T}$ for $\iota\in\{1,...,T\}$, and then construct all possible 6-tuples of edges which form a tetrahedron with $d_i$ being the minimum length of its edges, by essentially combining the triangles. This can be done as follows:

    \fbox{\begin{minipage}{38em}
        \begin{algorithm}[H]
          take $\mathcal{T}^{(i)}$ for all $i\in\{1,...,E-3\}$\ from above\;
          $i\gets1; \qquad \qquad$ \% index of set $\mathcal{T}^{(i)}$, consisting of all triangles with $d_i$ as its smallest side\\
          $j\gets2; \qquad \qquad$ \% index of $\mathcal{T}^{(j)}$, for $E>j>i$\\

          \While{$i\leq (E-3)$}{
            $h\gets1; \qquad \qquad$ \% index for all elements of $\mathcal{T}^{(i)}$\\
            $\delta\gets1; \qquad \qquad$ \% index of 6-tuple tetrahedron considering $\mathcal{T}^{(i)}$\\
            $d'_1 \gets 1^{st}$ element of $\mathcal{T}_h^{(i)}$;  $h_1 \gets d'_1$'s original index\;
            $d'_2 \gets 2^{nd}$ element of $\mathcal{T}_h^{(i)}$;  $h_2 \gets d'_2$'s original index\;
            $d'_3 \gets 3^{rd}$ element of $\mathcal{T}_h^{(i)}$;  $h_3 \gets d'_3$'s original index\;
            $h \gets h+1$\;

            \While{$h\leq |\mathcal{T}^{(i)}|$}{
              $\tilde{d'}_1 \gets 1^{st}$ element of $\mathcal{T}_h^{(j)}$;  $h_4 \gets \tilde{d}'_1$'s original index\;
              $\tilde{d'}_2 \gets 2^{nd}$ element of $\mathcal{T}_h^{(j)}$;  $h_5 \gets \tilde{d}'_2$'s original index\;
              $\tilde{d'}_3 \gets 3^{rd}$ element of $\mathcal{T}_h^{(j)}$;  $h_6 \gets \tilde{d}'_3$'s original index\;
              \If {$\{d_{h_2},d_{h_3}\}\cap\{d_{h_4},d_{h_5}\} = \varnothing$}{
                find the triangles which have one of the following combinations of sides ($\textit{both can't occur simultaneously}$, and we only need to search in $\bigcup_{\eta\in \hat{H}}\mathcal{T}^{\eta}$ for $H:=\{h_2,h_3,h_4h_5\}$, $\hat{H}:=H\backslash max\{H\}$), and their corresponding $3^{rd}$ side is assigned below to $x_{\iota\in\{1,2\}}$:\\
                $\qquad \text{(i)} \{d_{h_2},d_{h_4},x_1\}$ \& $\{d_{h_3},d_{h_5},x_2\}$ \\
                $\qquad \text{(ii)} \{d_{h_2},d_{h_5},x_1\}$ \& $\{d_{h_3},d_{h_4},x_2\}$ \\
                \eIf{$x_1 = x_2$}{
                  $h_6 \gets x_1$\;
                  $\Delta^{(i)}_{\delta} \gets \{d_{h_1},d_{h_2},d_{h_3},d_{h_4},d_{h_5},d_{h_6}\}$\;
                  $\Delta^{(i)} \gets \{\Delta^{(i)},\Delta^{(i)}_{\delta}\}$\;
                  $\mathcal{V}^{(i)}_{\delta} \gets |\frac{1}{288}\sqrt{det\begin{pmatrix}
                    0 & 1 & 1 & 1 & 1  \\
                    1 & 0 & (d_{h_2})^2 & (d_{h_3})^2 & (d_{h_6})^2\\
                    1 & (d_{h_2})^2 & 0 & (d_{h_1})^2 & (d_{h_4})^2\\
                    1 & (d_{h_3})^2 & (d_{h_1})^2 & 0 & (d_{h_5})^2\\
                    1 & (d_{h_6})^2 & (d_{h_4})^2 & (d_{h_5})^2 & 0 \end{pmatrix}}|$\;

                  $\mathcal{V}^{(i)} \gets \{\mathcal{V}^{(i)},\mathcal{V}^{(i)}_{\delta} \}$\;
                  $\delta\gets\delta+1$\;
                }{
                  disregard current 6-tuple;
                }
              }
              $h \gets h+1$\;
            }{
            }
            $i\gets i+1$\;
          }
          $\mathcal{V} \gets \bigcup_{\iota=1}^{E-3} \mathcal{V}^{(\iota)}; \qquad \qquad$ \% Set with all tetrahedron 6-tuples\\
          $\Delta \gets \bigcup_{\iota=1}^{E-3} \Delta^{(\iota)}; \qquad \qquad$ \% Set with the corresponding volumes of the  tetrahedrons\\
        \end{algorithm}
      \end{minipage}}

\item Now we want to find the collection of tetrahedrons which have a sum of volume equal to $V$. In order to do this, we re-arrange out sets $\mathcal{V}$ and $\Delta$ to $\hat{\mathcal{V}}$ and $\hat{\Delta}$, in ascending order with respect to the mode of the $\mathit{faces}$ of the tetrahedrons. We do this because it will be a lot faster to identify whether a tetrahedron is not part of the overall n-point configurations, and we will disregard it. We then go through the following steps:

    \begin{enumerate}
    \item Take in order the tetrahedron $\delta_i\in\hat{\Delta}$ and it's corresponding volume $\nu_i\in\hat{\mathcal{V}}$, and then it's 3-tuple "$\textit{triangle face}$" with the smallest mode, $t^{(i)}_1$.
    \item Find the next tetrahedron in the ordered set $\hat{\Delta}$ which has $t^{(i)}_1$ as a face, and combine the two potential tetrahedrons, to get a hexahedron and its volume.
    \item Considering the 6 3-tuple faces of the hexahedron we repeat step (b) and this is done until we exceed the total volume an n-tuple or we have an n-tuple with a total volume less than V.
    \item In this case, we take out the last tetrahedron which was added and add the next possible candidate and check the conditions from step (c) and repeat until all possible candidate tetrahedrons have been checked. If we are not successful, we then go "2 steps back" and take the next possible candidate for the our $2^{nd}$ most recent selection of a tetrahedron.
    \item We then repeat steps (b)-(d) considering the appropriate shape we have after each iteration.
    \item If we are not successful and have back-traced back to $\delta_i$, we consider the next appropriate tetrahedron from $\hat{\Delta}$ and repeat steps (b)-(e).
    \item Given the conditions on our set $dist(P)$, this algorithm will terminate once it successfully finds the tuple set of the exterior faces  of the overall convex shape formed by the configuration $P$, and the set of tetrahedrons $\Delta_{final}$ which form it.
    \end{enumerate}
  \item At this point, we know the triangles on the exterior of the overall convex shape we are looking for, as well as the which distances from our initial set $dist(P)$ correspond to which point. From here it is therefore only a matter of selecting an arbitrary tetrahedron from $\Delta_{final}$ and placing it in $\R^3$, and based on this initialization we construct 3 points $P$, consider the tetrahedrons in $\Delta_{final}$ adjacent to the one we initially constructed and then construct 4 more points, repeat this process until we have exhausted all elements of $\Delta_{final}$. This will therefore give us an orthogonal transformation of $P$.
\end{enumerate}

The construction of $P$ is unique upto a rigid motion, by the setup of the problem.\\
The point matching problem is further explored in [9].\\
\textbf{Open Question :}
\textit{Given $dist(P)$ for $P\subset\R^D$, how do we reconstruct $P$ upto a rigid motion?}

\subsection{Kabsch's Algorithm - Find Rotation And Translation [4],[5]}
There's a well known algorithm for finding the optimal rotation which minimizes the \textit{root mean squared deviation} between two paired sets of points. In our case there exists an exact rotation, so by simply running this algorithm we get the exact rotation. This algorithm is broken up into the following three steps:\\

\begin{itemize}
  \item{\textbf{Shift by Center of Mass}}
    $${CM_P}^{(j)} = \frac{1}{n}\sum_{i=1}^{n}{P_i}^{(j)} \qquad {CM_Q}^{(j)} = \frac{1}{n}\sum_{i=1}^{n}{\hat{Q}_i}^{(j)} \qquad j\in\{1,...,d\}$$
    $$\tilde{P}=(P-\vec{1} CM_P) \qquad \tilde{Q}=(Q-\vec{1} CM_Q) \qquad \text{for } \vec{1}\in\R^n \text{ and } \tilde{P},\tilde{Q} \in\R^{n\times d}$$

  \item{\textbf{Find the Optimal Rotation}}\\
    Here we use the \textit{singular value decomposition} of the \textit{covariance matrix} $H=\tilde{P}^T\tilde{Q}$.
    $$  H=U\Sigma V^T \qquad k=sign(det(VU^T))\cdot1 \qquad
    D = \begin{pmatrix} 1 & 0 & \cdots & 0 & 0\\
                        0 & 1 & \cdots & 0 & 0\\
                        \vdots & \vdots & \ddots &\vdots &\vdots\\
                        0 & 0 &\cdots & 1 & 0\\
                        0 & 0 &\cdots & 0 & k\\ \end{pmatrix}$$
    $$R = VDU^T$$
    The $D$ matrix is used to take care of any reflections which might have taken place.

  \item{\textbf{Find the Translation $\vec{t}$}}
  $$\vec{t} = (-R\times {CM_P}^T + {CM_Q}^T)^T \qquad \vec{t}\in\R^d$$

\end{itemize}

\subsection{Example Results And Visualization}

Below is an example where construct a random 60-point configuration $P$. Using $P$ we then construct $Q$, such that $P \sim Q$. We then use the method described in section 6.3 to label the points, and that in section 6.4 to confirm that the configurations align. Indeed, we got that $(P-P_{\pi}^{-1}QP_{\pi}) = 0$.



\begin{figure}[h]
\centerline{\includegraphics[width=25cm]{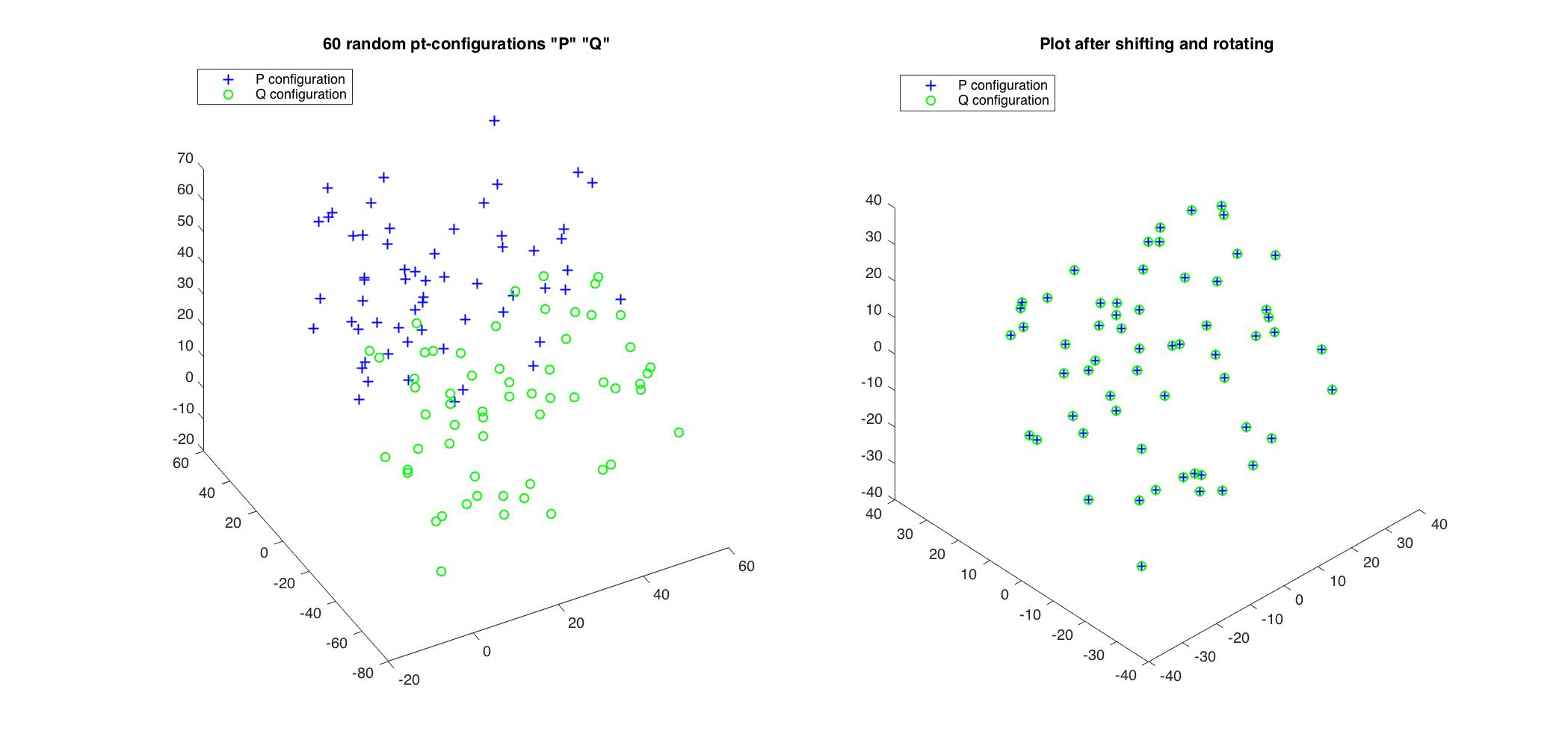}}
\caption{Plot with $P$ and $Q$ before and after alignment}
\end{figure}

\subsection{Alternative Way For Constructing Rotation After Relabelling And Shifting}
After shifting our two configurations as shown in section 6.4, we constructed the matrices $\tilde{P}$ and $\tilde{Q}$ with entries the coordinates of all points of $P$ and $Q$ with center of mass being the origin. Assuming that $n\geq D$, we can randomly select $D$ rows of $\tilde{P}$ and the corresponding rows in $\tilde{Q}$ (we have already relabelled them), and construct a \textit{change of basis}. In order to do this we need to first confirm that the vectors are \textit{linearly independent} which is pretty simple using software, as you simply need to make sure that the $D\times D$ matrices consisting of these vectors in their columns have a \textit{nonzero determinant}, or are \textit{full-rank}.
So if we have selected $D$ linearly independent vector from $\tilde{P}$, which we denote as $\mathfrak{B}_P=\{\vec{p_1},...,\vec{p_D}\}$, and their corresponding vectors in $\tilde{Q}$, $\mathfrak{B}_Q=\{\vec{q_1},...,\vec{q_D}\}$, these form a basis for $\R^d$, and the rotation we are looking for is the \textit{change of basis} between $\mathfrak{B}_P$ and $\mathfrak{B}_Q$. So we essentially do the following to construct the rotation $R$:
$$\mathcal{P}_{D}=   \begin{pmatrix}
    \vec{p_1} &    \hdots &    \vec{p_D}
    \end{pmatrix} \in \R^{D\times D}     \qquad
\mathcal{Q}_{D}=   \begin{pmatrix}
    \vec{q_1} &    \hdots &    \vec{q_D}
    \end{pmatrix} \in \R^{D\times D}$$
$$R\mathcal{P}_{D}=\mathcal{Q}_{D} \qquad \Longrightarrow \qquad R=\mathcal{Q}_{D}{\mathcal{P}_{D}}^{-1}$$

\subsection{SVD Approach After Relabelling}

An alternative way of computing the permutation $R$ is using the $SVD$ and the fact that the singular values of a matrix are unique. We are considering the case where $n\geq D$ and $U \in \R^{D\times D}, \Sigma\in \R^{D\times n}, V \in \R^{n\times n} $, so we can truncate the \textit{right singular vectors matrix} $V$, as the first $D$ right-singular vectors for $\mathcal{P}=\begin{pmatrix} \vec{p_1} & \hdots & \vec{p_n} \end{pmatrix} \in \R^{D\times n}$ and $\mathcal{Q}=\begin{pmatrix} \vec{q_1} & \hdots & \vec{q_n} \end{pmatrix} \in \R^{D\times n}$ match up.

Notationally we use $A_D=A(1:D,1:D)$ to denote the truncation of matrix $A$, by taking the submatrix consisting of the columns and rows 1 through $D$ of $A$.

For $\mathcal{P} = U_P \Sigma_P {V_P}^T$ and $\mathcal{Q} = U_Q \Sigma_Q {V_Q}^T$, we know that $\Sigma_P = \Sigma_Q$ and that $\exists R\in O(D)$ s.t. $R\mathcal{P}=\mathcal{Q}$. It then follows that for:
$$ V_{P_D}:=V_P(1:D,1:D) \qquad V_{Q_D}:=V_Q(1:D,1:D) \qquad \Sigma_D := \Sigma_P(1:D,1:D) = \Sigma_Q(1:D,1:D) $$
$$ R\mathcal{P} = \mathcal{Q} \qquad \Rightarrow \qquad R(U_P \Sigma_P {V_P}^T) = U_Q \Sigma_Q {V_Q}^T \qquad \Rightarrow \qquad (R U_{P}) \Sigma_D {V_{P_D}}^T = U_{Q} \Sigma_D {V_{Q_D}}^T $$

If the singular values of $\mathcal{P}$ and $\mathcal{Q}$ are all distinct, then the $SVD$ of the two matrices are unique. If they are not distinct, then $\exists U_{P}, U_{Q}$ and $M$ a permutation matrix s.t. $M U_{P_D} = U_{Q_D}$, which implies that $V_{P_D} = V_{Q_D}$. So for simplicity let's consider this \textit{singular value decomposition}, where $M=I_{D\times D}$. It then follows that for all point configurations which are congruent upto a rigid motion, there exist a \textit{singular value decomposition} and $R\in O(D)$, s.t.:
$$ R U_{P} = U_{Q} \qquad V_{R_D} = V_{Q_D} \qquad \Longrightarrow \qquad R= U_Q{U_P}^{T}
$$

\subsection{On Matching Point Configurations}
\begin{Lem}
  $[6]$ Let $\mathcal{P},\mathcal{Q} \in \R^{D\times n}$ as defined above. Then $\mathcal{P}^{T}\mathcal{P}=\mathcal{Q}^{T}\mathcal{Q}$ \textit{if and only if} $\exists A \in O(D)$ such that $A\mathcal{P}=\mathcal{Q}$.
\end{Lem}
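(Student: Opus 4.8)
The plan is to dispatch the easy implication by direct substitution and to handle the substantive implication by building $A$ first on the column space of $\mathcal{P}$ and then extending it orthogonally to all of $\R^D$.

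For the ``if'' direction, assume $A\mathcal{P}=\mathcal{Q}$ with $A\in O(D)$. Then $\mathcal{Q}^{T}\mathcal{Q} = \mathcal{P}^{T}A^{T}A\mathcal{P} = \mathcal{P}^{T}\mathcal{P}$, since $A^{T}A = I_{D}$. This needs no further comment.

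For the ``only if'' direction, set $G := \mathcal{P}^{T}\mathcal{P} = \mathcal{Q}^{T}\mathcal{Q}\in\R^{n\times n}$. First I would define a candidate map on $\mathrm{col}(\mathcal{P})\subseteq\R^{D}$ by $A_{0}(\mathcal{P}x) := \mathcal{Q}x$. The key point is well-definedness: if $\mathcal{P}x=\mathcal{P}y$ then $\mathcal{P}(x-y)=0$, so $(x-y)^{T}G(x-y) = \|\mathcal{P}(x-y)\|^{2} = 0$; using $G=\mathcal{Q}^{T}\mathcal{Q}$ as well gives $\|\mathcal{Q}(x-y)\|^{2} = (x-y)^{T}G(x-y) = 0$, hence $\mathcal{Q}x=\mathcal{Q}y$. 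Thus $A_{0}$ is a well-defined linear map from $\mathrm{col}(\mathcal{P})$ onto $\mathrm{col}(\mathcal{Q})$, and the same identity $\|\mathcal{P}x\|^{2} = x^{T}Gx = \|\mathcal{Q}x\|^{2}$ shows it is an isometry; in particular it is injective, so $r := \dim\mathrm{col}(\mathcal{P}) = \dim\mathrm{col}(\mathcal{Q})$. Next I would extend $A_{0}$ to an orthogonal map on all of $\R^{D}$: since $\dim\mathrm{col}(\mathcal{P})^{\perp} = D-r = \dim\mathrm{col}(\mathcal{Q})^{\perp}$, choose orthonormal bases of these two complements and let $A$ agree with $A_{0}$ on $\mathrm{col}(\mathcal{P})$ while mapping the chosen basis of $\mathrm{col}(\mathcal{P})^{\perp}$ to that of $\mathrm{col}(\mathcal{Q})^{\perp}$. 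Then $A$ sends an orthonormal basis of $\R^{D}$ to an orthonormal basis, so $A\in O(D)$, and $A\mathcal{P}=\mathcal{Q}$ holds column by column because each column of $\mathcal{P}$ lies in $\mathrm{col}(\mathcal{P})$, where $A=A_{0}$. Alternatively one could run the SVD argument already set up in Section 6.8, writing $\mathcal{P}=U_{P}\Sigma_{P}V_{P}^{T}$ and $\mathcal{Q}=U_{Q}\Sigma_{Q}V_{Q}^{T}$: equality of $G$ forces $\Sigma_{P}=\Sigma_{Q}$ together with a compatible choice of right singular vectors, after which $A=U_{Q}U_{P}^{T}$ works; but the direct construction avoids the bookkeeping with repeated singular values.

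The only delicate point is the extension step: one must verify that the extension genuinely lies in $O(D)$ and is consistent with $A_{0}$, which rests entirely on the dimension count $\dim\mathrm{col}(\mathcal{P})^{\perp} = \dim\mathrm{col}(\mathcal{Q})^{\perp}$ coming from $A_{0}$ being an isometry. Everything else is routine linear algebra, and no case distinction between $D\ge n$ and $D<n$ is required, since the argument works only with ranks and orthogonal complements inside $\R^{D}$.
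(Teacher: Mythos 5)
Your proof is correct. Note, however, that the paper does not actually prove this lemma: it is stated with a citation to [6] (Jimenez--Petrova) and used as a black box, so there is no in-paper argument to compare yours against. Your construction is the standard one and is complete: the ``if'' direction is immediate from $A^{T}A=I_D$, and for the converse the identity $\|\mathcal{P}x\|^{2}=x^{T}Gx=\|\mathcal{Q}x\|^{2}$ simultaneously gives well-definedness, linearity, and the isometry property of $A_{0}:\mathrm{col}(\mathcal{P})\to\mathrm{col}(\mathcal{Q})$, after which the rank equality lets you extend to an element of $O(D)$ by matching orthonormal bases of the orthogonal complements. The one step you leave implicit is that a norm-preserving linear map preserves inner products (polarization), which is what guarantees that $A_{0}$ carries an orthonormal basis of $\mathrm{col}(\mathcal{P})$ to an orthonormal set in $\mathrm{col}(\mathcal{Q})$ and hence that the extended map sends an orthonormal basis of $\R^{D}$ to an orthonormal basis; it is worth one sentence. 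Your aside about the SVD route is also apt --- it is essentially what Section 6.8 of the paper sketches for computing $R=U_{Q}U_{P}^{T}$ --- and you are right that the direct extension argument avoids the degeneracy bookkeeping with repeated singular values.
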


The above Lemma implies that a necessary and sufficient condition for two configurations to be equivalent, is that their \textit{Gramian-matrices} are equal, after translating them such that their center of mass is at the origin.

\subsection{Kabsch's Algorithm On  $\varepsilon$-diffeomorphisms}

Kabsch's algorithm also finds the rotation in order to align point configuration $P$ with a point configuration $Q$, where $\forall i\in\{1,...,n\} \text{  } \exists i'\in\{1,...,n\}$ such that $||p_i-q_{i'}||<\frac{\varepsilon}{2}$, for $p_i\in P$, and $q_{i'}\in Q$, with a given error. Note that for $\epsilon<1$ and the property required for $\varepsilon$-diffeomorphisms $(1-\varepsilon) \leq \frac{||p_i-p_j||}{||q_{i'}-q_{j'}||} \leq (1+\varepsilon)$, we get that $\big| |p_i-p_j|-|q_{i'}-q_{j'}|\big|\leq \varepsilon$. Below we justify that the condition on the points $||p_i-q_{i'}||<\frac{\varepsilon}{2}$, satisfy the stated equivalent condition for $\varepsilon$-diffeomorphisms:

$$
\big| |p_i-q_{i'}|-|p_j-q_{j'}| \big| = \big| \varepsilon_i - \varepsilon_j \big| \leq \frac{\varepsilon}{2} \qquad \text{as } \varepsilon_i, \varepsilon_j \in \Big(-\frac{\varepsilon}{2},\frac{\varepsilon}{2}\Big)
  $$
\hrule
$$
|p_i-q_{i'}| < \varepsilon_i \qquad |p_j-q_{j'}| < \varepsilon_j \qquad \Rightarrow \qquad \big| |p_i-p_j|-|q_{i'}-q_{j'}| \big| < \big| \varepsilon_i + \varepsilon_j \big|
  $$
  $$
  \big| |p_i-p_j|-|q_{i'}-q_{j'}| \big| \leq \big| |p_i-p_j|+|q_{i'}-q_{j'}| \big|
  $$

  $$
  \Longrightarrow \big| |p_i-p_j|-|q_{i'}-q_{j'}| \big| \leq \big| |p_i-p_j|+|q_{i'}-q_{j'}| \big| \leq \big| \varepsilon_i + \varepsilon_j \big| \leq \varepsilon
$$
$$
  \Longrightarrow \big| |p_i-p_j|-|q_{i'}-q_{j'}| \big| \leq \varepsilon
$$
\hrule
\vspace{5mm}

Since we have \textit{exact bounds} on the difference $|p_i-q_{i'}|$, we can easily implement simulation to confirm that Kabsch's algorithm works on \textit{random} n-point configurations $P$ with points in $Q$ which satisfy the above condition. The alignment will obviously \textit{not be exact}. An alternate solution to the point cloud registration problem can be seen in [7], for both the rigid and non-rigid cases.

\begin{figure}[h]
\centerline{\includegraphics[width=10cm]{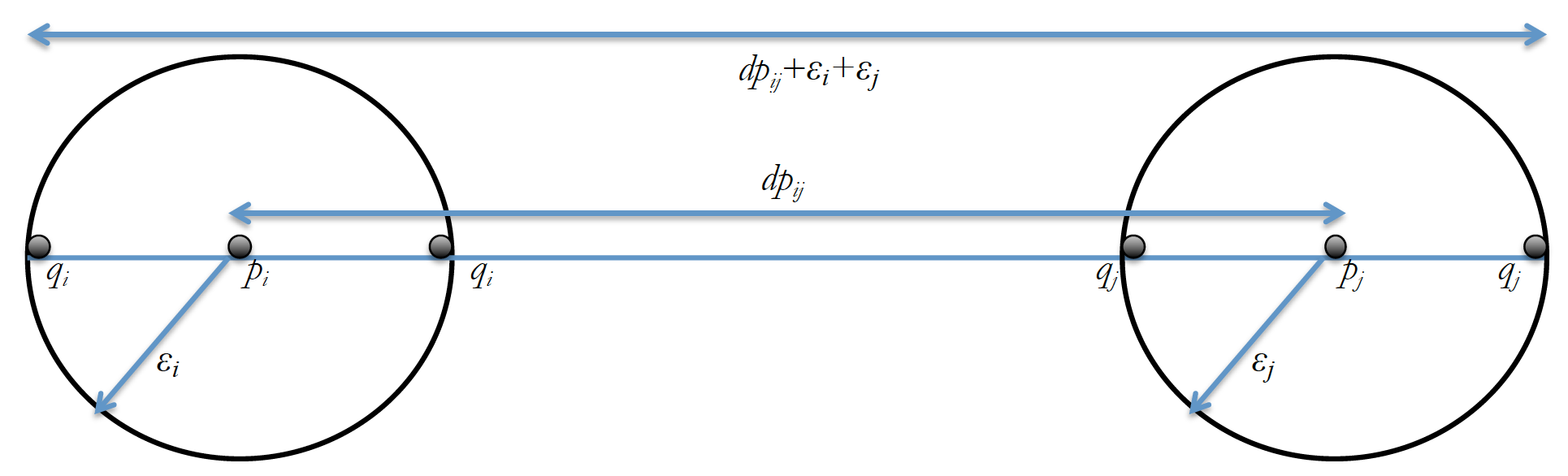}}
\caption{Depiction of inequality $\big| |p_i-p_j|-|q_{i'}-q_{j'}| \big| < \big| \varepsilon_i + \varepsilon_j \big|$}
\end{figure}

\begin{figure}[h]
\centerline{\includegraphics[width=15cm]{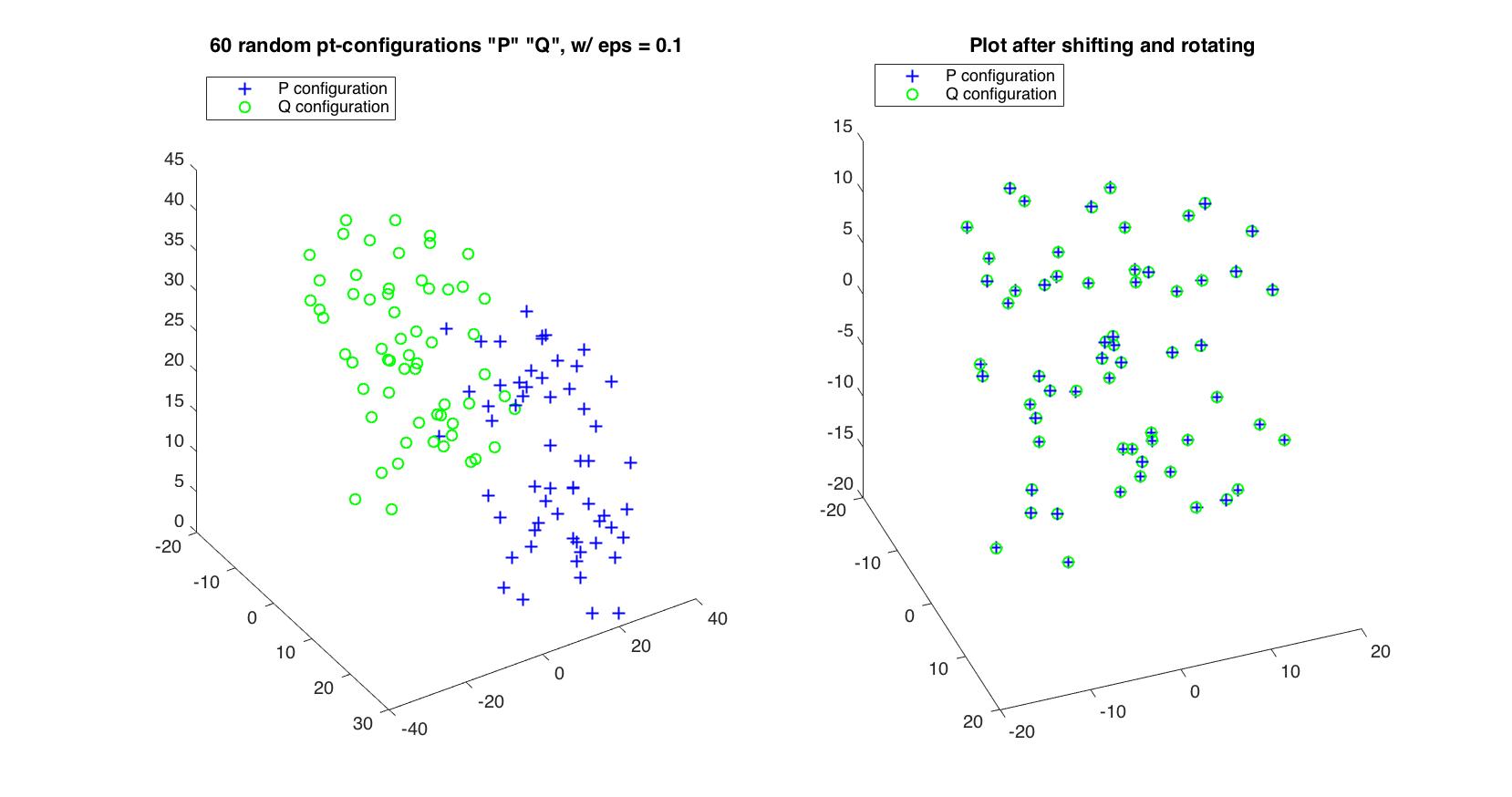}}
\caption{Plot with $P$ and $Q$ before and after alignment, for $\varepsilon$-diffeomorphisms}
\end{figure}

\subsection{Difference In Rotations On $\varepsilon$-diffeomorphisms Using Kabsch's Algorithm}

A simulation over multiple random $P$ and corresponding $Q$ configurations and their alignment was implemented, which resulted in the \textit{error-plots} provided on the next page. These were over $\varepsilon \in \{0.01,0.02,0.04,.06,0.08,0.1\}$ and $n\in\{10,12,12,...,150\}$. The error was calculated in means of \textit{sum of squared differences} for each coordinate over 30 averaged simulations, and then averaged the coordinates. What the plots reveal is simply that for greater $\varepsilon$ we have a greater averaged error for the same number of points, while there is no obvious trend. This is also justified by figure 10, where a \textit{Least-Squared Fit} was used, and it's obvious that as $n$ increases, the error also tends to increase.

An alternative, more detailed approach to such configurations, is described in [6].

\newpage


\begin{figure}[h]
\centerline{\includegraphics[width=18.8cm]{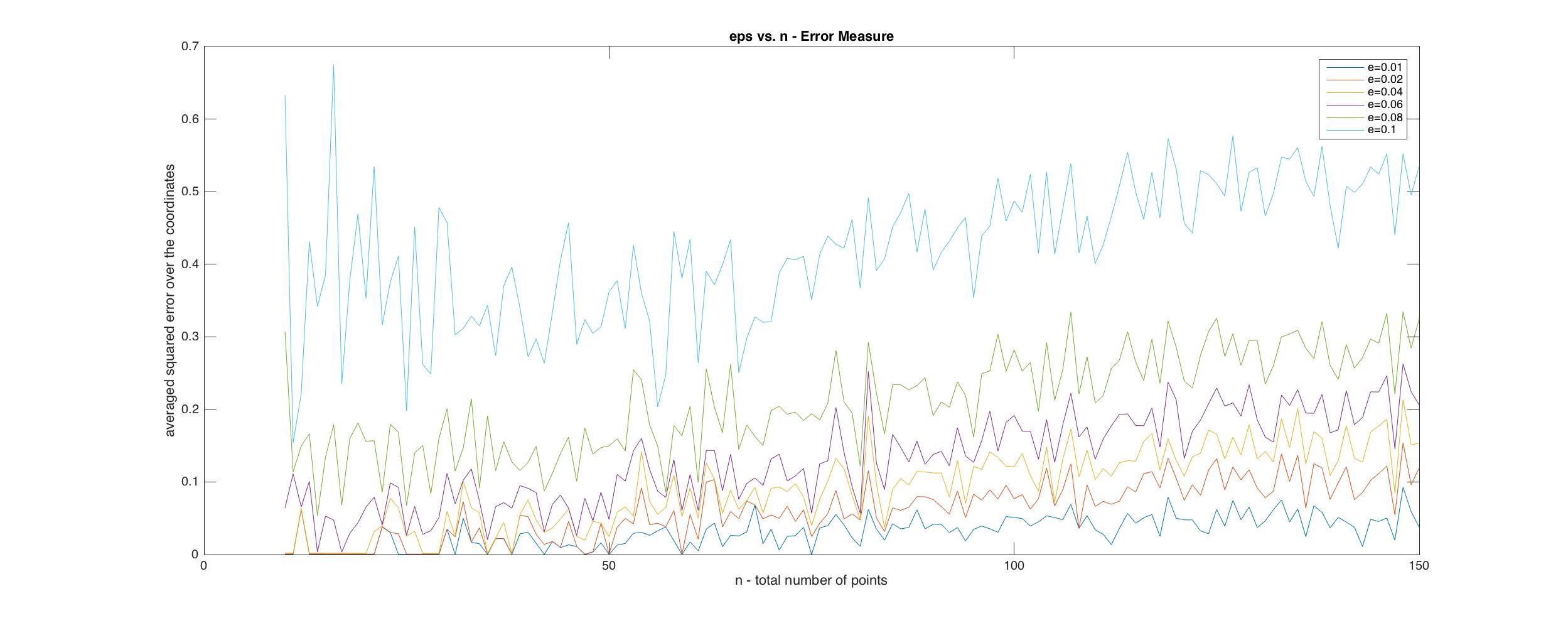}}
\caption{Error plot over 30 random samples, with values of $\varepsilon$ in the legend}
\end{figure}

\begin{figure}[h]
\centerline{\includegraphics[width=18.8cm]{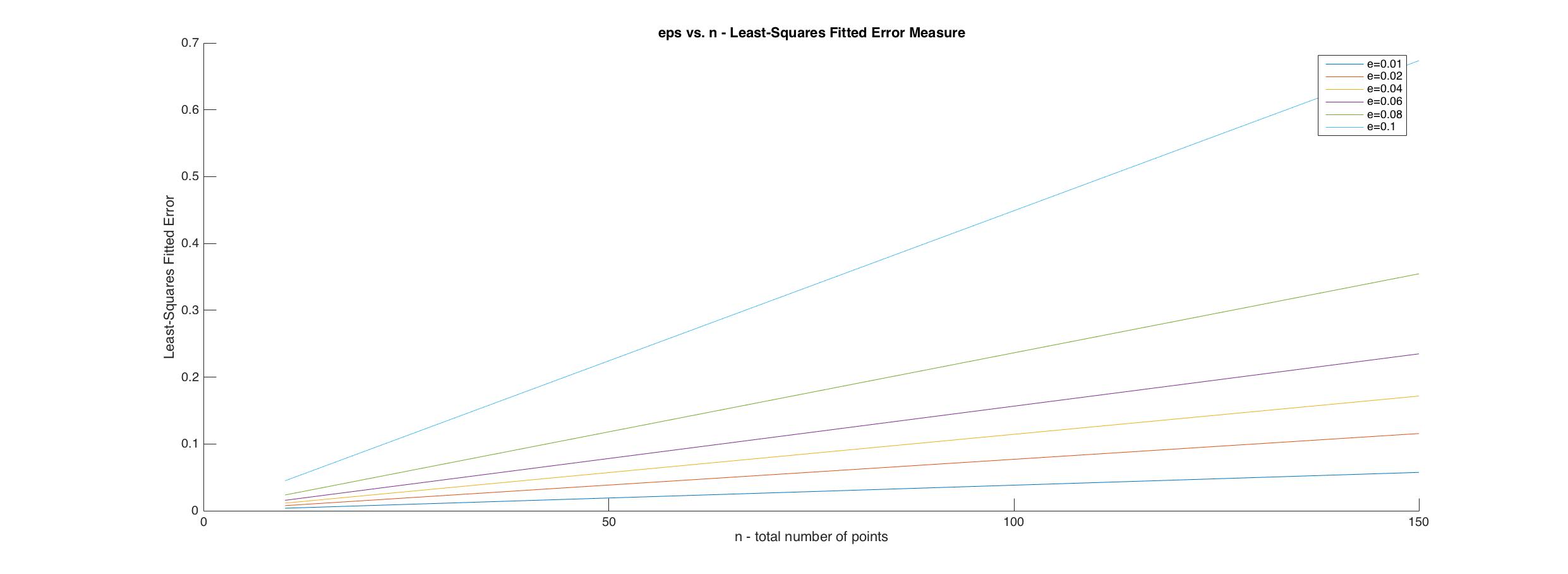}}
\caption{Linear Fit on error, using \textit{Least-Squares Fit}}
\end{figure}

\pagebreak

\vspace{-5mm}

\textbf{Acknowledgements:} Neophytos Charalambides was supported by the University of Michigan Math Department and Research Experience for Undergraduates program, Brad Schwartz was supported by the University of Michigan Undergraduate Research Opportunity Program, and Dr. Steven Damelin by the American Mathematical Society. Thanks goes to all departments and programs for the continued support.



\section*{References}

[0] S. B. Damelin, {\it On the Whitney extension problem for near isometries and beyond}, submitted for consideration for publication, arxiv:2103.09748.

[1] S.B.Damelin, C.Fefferman, \textit{On the Whitney Extension-Interpolation-Alignment problem for almost isometries with small distortion in $\R^D$}, arxiv:4011766

[2] M.Boutin, G.Kemper, \textit{On Reconstructing n-Point Configurations from the Distributions of Distances or Areas}, from https://arxiv.org/abs/math/0304192

[3] M.Boutin, G.Kemper, \textit{Which Point Configurations are Determined by the Distribution of their Pairwise Distances?}, from https://arxiv.org/abs/math/0311004

[4] Nghia Ho. (n.d.). Retrieved June 15, 2016, from http://nghiaho.com/?page$\_$id=671

[5] Kabsch algorithm. (n.d.). Retrieved June 15, 2016, from\\\indent https://en.wikipedia.org/wiki/Kabsch$\_$algorithm

[6] D.Jimenez, G.Petrova, \textit{On Matching Point Configurations}, from \\http://www.math.tamu.edu/~gpetrova/JP.pdf

[7]H. Maron, N. Dym, I. Kezurer, S. Kovalsky, Y. Lipman, \textit{Point Registration via Efficient Convex Relaxation}, from\\
https://services.math.duke.edu/\textasciitilde shaharko/projects/ProcrustesMatchingSDP\_lowres.pdf

[8] M. Werman, D. Weinshall, \textit{Similarity and Affine Distance Between Point Sets}, from\\
http://www.cs.huji.ac.il/\textasciitilde werman/Papers/comp.pdf

[9] E. Arkin, K. Kedem, J. Mitchell, J. Sprinzak, M. Werman, \textit{Matching Points into Pairwise-Disjoin Noise Regions: Combinatorial Bounds and Algorithms}, from\\
http://www.cs.huji.ac.il/\textasciitilde werman/Papers/match-point-regs.pdf

\end{document}